\newtheorem{theorem}{Theorem}[section]
\newtheorem{lemma}[theorem]{Lemma}
\newtheorem{proposition}[theorem]{Proposition}
\newtheorem{definition}[theorem]{Definition}
\theoremstyle{definition}
\begin{document}

\title[A simplified and unified generalization of some majorization results]{A simplified and unified generalization of some majorization results}

\author[Shirin Moein]{Shirin Moein\textsuperscript{1,2}}

\author[Rajesh Pereira]{Rajesh Pereira\textsuperscript{2}}

\author[Sarah Plosker]{Sarah Plosker\textsuperscript{3,2}}

\thanks{\textsuperscript{1}Department of Mathematical Sciences, Isfahan University of Technology, Isfahan 84156-83111, Iran}

\thanks{\textsuperscript{2}Department of Mathematics and Statistics, University of Guelph, Guelph, ON N1G 2W1, Canada}

\thanks{\textsuperscript{3}Department of Mathematics and Computer Science, Brandon University,
Brandon, MB R7A 6A9, Canada}

\keywords{matrix majorization;  multivariate majorization; sublinear functionals; convex functionals; stochastic operators; Markov operators;  
}
\subjclass[2010]{ 	
 	15B51,   
 	26B25, 
 	26D15, 
 	47B65   
}


\begin{abstract}  
We consider positive, integral-preserving linear operators acting on $L^1$ space, known as stochastic operators or Markov operators. We show that, on finite-dimensional spaces, any stochastic operator can be approximated by a sequence of stochastic integral operators (such operators arise naturally when considering matrix majorization in $L^1$). We collect a number of results for vector-valued functions on $L^1$, simplifying some proofs found in the literature. In particular, matrix majorization and multivariate majorization are related in $\mathbb{R}^n$. In $\mathbb{R}$, these are also equivalent to convex function inequalities. 
\end{abstract}

\maketitle

\section{Introduction}\label{sec:Intro}
 
In this work, we connect  several generalizations of majorization  in reference to vector-valued measurable functions; notably, matrix majorization, multivariate majorization, mixing distance, $f$-divergence, and coarse graining.  
While some results are known, they appear rather obscure in the literature; we also simplify arguments when possible. 

We first recall the definition of (vector) majorization: if $x, y\in \mathbb{R}^n$, we say $x$ is \emph{majorized} by $y$, denoted $x\prec y$, if \begin{eqnarray*}
\sum_{j=1}^{k}x^{\downarrow}_{j}\leq \sum_{j=1}^{k}y^{\downarrow}_{j}\quad \forall k\in \{1,\dots,n-1\}
\end{eqnarray*}
with equality when $k=n$, where $x$ has been reordered so that $x^{\downarrow}_{1}\geq x^{\downarrow}_{2}\geq \cdots \geq x^{\downarrow}_{n}$ (and similarly for $y$). A well-known theorem of Hardy, Littlewood, and P\'{o}lya states that $x\prec y$ is equivalent to the existence of a doubly stochastic matrix $S$ such that $x=Sy$ \cite[Theorem 8]{HLP}.

Consider now two matrices $R\in M_{m\times n}(\mathbb{R})$ and $T\in M_{p\times n}(\mathbb{R})$. We say $R$ is \emph{majorized} by $T$, denoted $R\prec T$ (where it is clear from context that this is \emph{matrix} majorization rather than vector majorization, although matrix majorization  is sometimes denoted $\prec_d$ or $\prec_S$ to distinguish it from vector majorization) if there exists a column stochastic matrix $S\in M_{m\times p}(\mathbb{R})$ such that $R= ST$. For more information on matrix majorization, see \cite{dahl1999}; when we restrict ourselves to the special case where $m=p$ and $S$ is doubly stochastic we get a more restrictive ordering called multivariate majorization, see \cite[Chapter 15]{MOA}. Matrix majorization has recently been generalized to quantum majorization between bipartite states \cite{Gour2017}.

  We denote by $L^1(X, \mu)$, or simply $L^1(X)$ if the measure $\mu$ is clear from context, the set of all functionals $f$ satisfying $\int_X|f|d\mu<\infty$. 
 If $f\in L^1(X)$, the distribution function of $f$   is defined by $d_f(s)= \mu (\{x: f(x)>s\} )$ for all real $s$, 
and the decreasing rearrangement of $f$ is defined by  
\begin{eqnarray*}
f^\downarrow (t)&=& \inf \{s: d_f(s)\leq  t\},\quad \quad\quad 0 \leq  t \leq \mu( X)\\
&=& \sup \{s: d_f(s)> t\}, \quad \quad\quad 0 \leq  t \leq \mu(X).
\end{eqnarray*}

We are now in the position to  define continuous majorization. Typically the word ``continuous'' is dropped as it is clear from context.

\begin{definition}\label{def:cont} Let $(X,  \mu)$ and $(Y,  \nu)$ be finite measure spaces for which $a=\mu( X)=\nu(Y)$.
If $f\in L^1(X,\mu)$ and $g\in L^1( Y, \nu)$ satisfy 
\begin{eqnarray*}
\int_0^t f^\downarrow d x&\leq & \int_0^t g^\downarrow d x\quad \forall t:\, 0\leq  t\leq  a\\
\textnormal{and }\int_0^a f^\downarrow d x&=& \int_0^a g^\downarrow d x, 
\end{eqnarray*}
where the integration is with respect to Lebesgue measure, then we say that  $f$ is majorized by $g$, denoted $f\prec g$.
\end{definition}

Following \cite{dahl1999}, we define the positive homogeneous subadditive functionals on $\mathbb{R}^n$, also called \emph{sublinear} functionals, to be all functionals $\psi$ satisfying $\psi(\lambda x)=\lambda\psi(x)$ and $\psi(x+y)\leq \psi(x)+\psi(y)\ $ for all $x, y\in \mathbb{R}^n$, and $\lambda\geq 0$.


Part of  \cite[Theorem 3.3]{dahl1999} shows that if $R\in M_{m\times n}(\mathbb{R})$ and $T\in M_{p\times n}(\mathbb{R})$, then $R\prec T$ is equivalent to $\sum_{j=1}^m\psi(r_j)\leq \sum_{j=1}^p\psi(t_j)$  for all sublinear functionals $\psi$, where $r_j$ is the $j$th row of the matrix $R$, and similarly for $t_j$.

Given a measure space $(X,\mu)$, let $L^1(X, \mu, \mathbb{R}^n)$, or simply $L^1(X, \mathbb{R}^n)$, denote the set of all measurable functions $f$ from   $(X,\mu)$ to $\mathbb{R}^n$ that satisfy  $\int_X \vert f\vert d\mu <\infty$, where $\vert f \vert(x)=\sum_{k=1}^n |f_k(x)|$. 

The notion of a stochastic matrix was generalized to a  stochastic operator on $L^1([0,1])$ in \cite{RSS1980}; we provide the corresponding definition for a stochastic operator from $L^1(Y, \nu)$ to  $L^1(X, \mu)$. Such an operator is sometimes referred to a   \emph{Markov operator} in the literature \cite{LMbook}.

\begin{definition}
Let $(X,\mu)$ and $(Y,\nu)$ be $\sigma$-finite measure spaces. A linear operator $S:L^1(Y)\to L^1(X)$ is called a {\em stochastic operator} if 
\begin{enumerate}
    \item $S$ is positive (that is, $S$ takes positive elements to positive elements), and
    \item $\int_X Sf d\mu = \int_Y f d\nu, \quad \forall f\in L^1(Y)$.
\end{enumerate}
Moreover, if in addition to the two conditions above, $\mu(X)=\nu(Y)<\infty$ and $S1=1$, then $S$ is called a {\em doubly stochastic operator}.
\end{definition} 

We have the following lemma which will be useful later on.

\begin{lemma} \label{absineq} Let $(X,\mu )$ and $(Y,\nu)$ be $\sigma$-finite measure spaces.  Let $f\in L^1(Y)$ and $S:L^{1}(Y)\to L^{1}(X)$ be a stochastic operator, then $\int_{X}\vert (Sf)(t)\vert d\mu(t)\le \int_{Y}\vert f(y)\vert d\nu(y)$. \end{lemma}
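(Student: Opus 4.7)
The plan is to reduce the claim to the integral-preservation axiom by first proving the pointwise inequality $|Sf| \le S|f|$ almost everywhere on $X$, and then integrating.

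First I would split $f$ into its positive and negative parts: write $f = f^+ - f^-$ with $f^+, f^- \ge 0$, both in $L^1(Y)$, so that $|f| = f^+ + f^-$. By the positivity hypothesis on $S$, the functions $Sf^+$ and $Sf^-$ are nonnegative elements of $L^1(X)$. By linearity, $Sf = Sf^+ - Sf^-$ $\mu$-a.e., hence by the triangle inequality applied pointwise,
\begin{equation*}
|(Sf)(t)| \;=\; |(Sf^+)(t) - (Sf^-)(t)| \;\le\; (Sf^+)(t) + (Sf^-)(t) \;=\; (S|f|)(t)
\end{equation*}
for $\mu$-almost every $t \in X$, where the last equality uses linearity of $S$ on $L^1(Y)$.

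Integrating this inequality against $\mu$ and then invoking condition (2) of the definition of a stochastic operator applied to the nonnegative function $|f| \in L^1(Y)$ yields
\begin{equation*}
\int_X |(Sf)(t)|\, d\mu(t) \;\le\; \int_X (S|f|)(t)\, d\mu(t) \;=\; \int_Y |f(y)|\, d\nu(y),
\end{equation*}
which is the desired bound.

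No step here is really an obstacle; the only thing worth being careful about is that the decomposition $f = f^+ - f^-$ and the identity $|f| = f^+ + f^-$ are used in $L^1(Y)$, so positivity of $S$ is applied to genuinely nonnegative $L^1$ functions. Everything else follows from linearity, the pointwise triangle inequality, and the integral-preservation axiom.
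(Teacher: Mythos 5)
Your proof is correct and follows essentially the same route as the paper's: decompose $f=f^+-f^-$, use positivity of $S$ to get the pointwise bound $|Sf|\le Sf^++Sf^-$, and then apply the integral-preservation property (the paper applies it to $f^+$ and $f^-$ separately rather than to $|f|$, which is an immaterial difference). No gaps.
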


\begin{proof} Let $f_+(y)=\max(f(y),0)$ and $f_-(y)=\max(-f(y),0)$.  Then \begin{eqnarray*} \int_{X}\vert Sf(x) \vert d\mu(x) &\le &\int_{X} S(f_+)(x)d\mu(x) +  \int_{X} S(f_-)(x)d\mu(x)\\ &=& \int_{Y}f_+(y)d\nu(y)+\int_{Y}f_-(y)d\nu(y)\\   & =& \int_{Y}\vert f(y)\vert d\nu(y).\end{eqnarray*} \end{proof}

Note that the absolute value function is a nonnegative sublinear functional on $\mathbb{R}$; we will later show that a similar inequality holds for all nonnegative sublinear functionals on $\mathbb{R}^n$.  We first need to describe how $S$ acts on an element of $ L^1(Y,\mathbb{R}^n)$.
If $f=(f_1,f_2,...,f_n)\in L^1(Y,\mathbb{R}^n)$, then $S$ acts componentwise on $f$; that is,  $Sf=(Sf_1,Sf_2,...,Sf_n)$.

\begin{definition} \label{exp}  Let $(X,\mu)$ be a $\sigma$-finite measure space and let $\mathcal{P}=\{E_i\}_{i\in \mathbb{N}}$ be a partition of $X$ into disjoint measurable sets of finite measure.  We define $M_{\mathcal{P}}$ to be the operator which maps every $f\in L^1(X)$ to $\sum_{i\in \mathbb{N}} a_i \chi_{E_i}$ where $a_i=\frac{1}{\mu(E_i)}\int_{E_i}f(x)d\mu(x)$ if $E_i$ has positive measure and $a_i=0$ if $E_i$ is measure zero. 
\end{definition}

 We note that it is easy to verify that $M_{\mathcal{P}}$ in Definition \ref{exp} is a  stochastic operator on $L^1(X)$; in fact, it maps $1\mapsto 1$ and is therefore a doubly stochastic operator.

Let $K$ be a convex set. A function $f:K\rightarrow K$ is \emph{affine} if $f(\lambda x +(1-\lambda)y)=\lambda f(x)+(1-\lambda)f(y)$ for all $x, y\in K$ and all $\lambda\in (0,1)$. We note that affine functions on the nonnegative face of the unit ball of $L^1$ are exactly the stochastic operators. Affine transformations on measure spaces are used to define coarse graining, a relation on the measurement statistics coming from two positive operator valued measures \cite{BQ1990, BQ1993, Heinonen, HZbook, Zan}. 
A stochastic operator is an affine 
 transformation between nonnegative faces of the unit balls in the respective measure spaces. Note that the $L^1$ norm is one of the few norms where the nonnegative elements of the unit ball form a face.

The following theorem is a combination of two well-known results in the literature.
\begin{theorem}\label{HLP}
If $f\in L^1(X,\mu)$, $g\in L^1(Y,\nu)$ where $\mu(X)=\nu(Y)< \infty$, then the following are equivalent:
\begin{enumerate}
    \item 
$f\prec g$, as in Definition \ref{def:cont}. 
\item For all convex functions $\phi:\mathbb{R}\to \mathbb{R}$, 
$$\int_X \phi(f)d\mu\leq \int_Y \phi(g)d\nu.$$
\item 
There exists a doubly stochastic operator $D:L^1(Y)\to L^1(X)$ such that $f=Dg$.
\end{enumerate}
\end{theorem}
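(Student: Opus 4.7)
The plan is to establish the three implications $(3) \Rightarrow (2) \Rightarrow (1) \Rightarrow (3)$, each of which has a well-understood strategy.

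For $(3) \Rightarrow (2)$, I would prove a Jensen-type inequality for doubly stochastic operators: given $f = Dg$ with $D$ doubly stochastic and $\phi: \mathbb{R} \to \mathbb{R}$ convex, the pointwise inequality $\phi(Dg)(x) \leq D(\phi(g))(x)$ holds a.e.\ on $X$. This is transparent for $D$ a stochastic integral operator, where $(Dg)(x)$ is literally a probability-weighted average of $g$-values (using $D1 = 1$) and ordinary Jensen applies; the approximation-by-integral-operators theorem alluded to in the abstract then transfers the inequality to a general $D$. Integrating over $X$ and invoking the integral-preserving property of $D$ applied to $\phi(g)$ gives $\int_X \phi(f)\, d\mu \leq \int_Y \phi(g)\, d\nu$.

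For $(2) \Rightarrow (1)$, I would test the inequality against two particular families. Taking $\phi(t) = t$ and $\phi(t) = -t$ yields $\int_X f\, d\mu = \int_Y g\, d\nu$, which, combined with the layer-cake identity $\int_X h\, d\mu = \int_0^a h^\downarrow\, du$, gives the equality condition in Definition \ref{def:cont}. Taking $\phi_s(t) = (t-s)_+$ for each real $s$ yields
\[
\int_0^a (f^\downarrow(u) - s)_+\, du \leq \int_0^a (g^\downarrow(u) - s)_+\, du
\]
after rewriting each side as an integral over the positive part of a rearrangement. Fixing $t \in [0,a]$ and setting $s = g^\downarrow(t)$, one uses that $g^\downarrow(u) \geq s$ precisely for $u \leq t$ to obtain the identity $\int_0^t g^\downarrow\, du = \int_0^a (g^\downarrow - s)_+\, du + s t$, while the analogous quantity for $f^\downarrow$ is bounded above by $\int_0^a (f^\downarrow - s)_+\, du + s t$; combining these inequalities recovers the integrated tail condition $\int_0^t f^\downarrow \leq \int_0^t g^\downarrow$.

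For $(1) \Rightarrow (3)$, which is the substantial implication, I would argue in the spirit of Ryff's theorem on orbits of $L^1$ functions under doubly stochastic operators. First reduce to the canonical situation $X = Y = [0,a]$ with Lebesgue measure and $f = f^\downarrow$, $g = g^\downarrow$, by pre- and post-composing with measure-preserving rearrangement operators (these are doubly stochastic and may be built from partition operators $M_{\mathcal{P}}$ as in Definition \ref{exp}). In the canonical setting I would approximate $f^\downarrow$ and $g^\downarrow$ by step functions on a partition of $[0,a]$ into $n$ equal pieces, invoke the finite-dimensional Hardy-Littlewood-Pólya theorem to obtain a doubly stochastic matrix $S_n$ mapping the averaged vector of $g^\downarrow$ to that of $f^\downarrow$, and lift $S_n$ to a doubly stochastic operator $D_n$ via Definition \ref{exp} and its associated stochastic integral operator. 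The main obstacle is the limit step: one must extract a subsequence of the $D_n$ converging to a bona fide doubly stochastic operator $D$ with $Dg = f$. This requires a compactness argument in a suitable topology on the convex set of doubly stochastic operators together with careful verification that positivity, the integral-preservation identity, and $D1 = 1$ all survive passage to the limit, and that the relation $f = Dg$ is preserved in $L^1$-norm.
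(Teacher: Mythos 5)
First, note that the paper does not actually prove Theorem \ref{HLP}: it cites Chong \cite{Chong} for the equivalence of (1) and (2) and Day \cite{Day} for the equivalence of (2) and (3), so you are attempting a proof the authors deliberately outsource to the literature. Your implications $(3)\Rightarrow(2)$ and $(2)\Rightarrow(1)$ are essentially sound. For $(3)\Rightarrow(2)$ the detour through integral-operator approximation can be made to work (take the finite-dimensional subspace to be $\operatorname{span}\{1,g,\phi(g)\}$, pass to an a.e.-convergent subsequence, and use continuity of $\phi$), but it is cleaner and avoids all convergence issues to write $\phi$ as a countable supremum of affine functions $a_it+b_i$ and use positivity of $D$ together with $D1=1$ to get $\phi(Dg)\le D(\phi(g))$ a.e.\ directly; either way you should say a word about the case $\phi(g)\notin L^1(Y)$, where both integrals are still well defined in $(-\infty,+\infty]$ because a convex function is bounded below by an affine one. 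The computation in $(2)\Rightarrow(1)$ with $\phi_s(t)=(t-s)_+$ and $s=g^\downarrow(t)$ is the standard Hardy--Littlewood--P\'olya argument and is correct, using equimeasurability of a function with its decreasing rearrangement.

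The genuine gap is in $(1)\Rightarrow(3)$, which is the substantial direction and which you leave as a named ``obstacle'' rather than an argument. Two things are missing. First, the reduction to $X=Y=[0,a]$ with $f=f^\downarrow$, $g=g^\downarrow$ is not a routine conjugation: you need a doubly stochastic operator $R:L^1(Y)\to L^1([0,a])$ with $Rg=g^\downarrow$ and another $E:L^1([0,a])\to L^1(X)$ with $Ef^\downarrow=f$. When the spaces are nonatomic these come from measure-preserving point maps (Ryff's theorem), but a composition operator cannot be measure preserving onto Lebesgue measure when $X$ or $Y$ has atoms, and handling the atomic case is precisely Day's contribution; the parenthetical appeal to the partition operators $M_{\mathcal{P}}$ of Definition \ref{exp} is not a construction. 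Second, the limit step: you must specify the topology in which the sequence $D_n$ of lifted doubly stochastic matrices has a convergent subnet (for instance the weak operator topology on contractions of $L^2$, available because doubly stochastic operators contract both $L^1$ and $L^\infty$), check that positivity, $D1=1$, and integral preservation survive the limit, extend the limit operator from $L^2$ back to $L^1$ (Lemma \ref{absineq} is the relevant estimate), and reconcile weak convergence of $D_ng$ with the norm convergence of $D_n$ applied to the discretized data when $g\notin L^2$. None of this is impossible, but as written the hardest third of the theorem is asserted rather than proved; if you do not wish to carry out this analysis, the honest move is the paper's own, namely to invoke \cite[Theorem 4.9]{Day}.
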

\begin{proof}
Chong \cite{Chong} in Theorem 2.5 proved the equivalence of 1 and 2 and Day \cite{Day} in Theorem 4.9 proved the equivalence of 2 and 3. 
\end{proof}

Of particular interest are the integral operators which are stochastic or doubly stochastic.

\begin{definition}
A \emph{stochastic kernel} is a measurable function $S:X\times Y\to[0,\infty)$ such that $\int_X S(x,y)d\mu(x)=1$ for almost all $y\in Y$.
A \emph{doubly stochastic kernel} is a stochastic kernel with the additional property that $\int_Y S(x,y)d\nu(y)=1$ for almost all $x\in X$.
\end{definition}

\begin{definition} An integral operator $M$ from $L^1(Y)$ to $L^1(X)$ given by $Mg=\int_{Y}S(x,y)g(y)d\nu(y)$ is said to be a \emph{stochastic integral operator} (resp.\ \emph{doubly stochastic integral operator}) if $S(x,y)$ is stochastic kernel (resp.\ doubly stochastic kernel).
\end{definition}

All stochastic integral operators are  stochastic operators, and all doubly stochastic integral operators are doubly stochastic operators. However, the converse of either statement is false. Indeed, consider the identity operator which is a doubly stochastic operator but is not a doubly stochastic integral operator nor a stochastic integral operator.

\section{Convex function inequalities}
We now discuss some properties of convex and sublinear functionals. 

%

\begin{proposition}  \label{prop:sublinHLP} Let $K$ be a convex cone  of $\mathbb{R}^n$.  Let $\phi:K \to \mathbb{R}$ be a convex functional.  Then $\psi (v,x):=x\phi(\frac{v}{x})$ is a sublinear functional on the cone $K\times (0,\infty)$.
\end{proposition}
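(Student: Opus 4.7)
The plan is to verify the two defining properties of a sublinear functional directly from the definitions. Positive homogeneity should be essentially automatic: for $\lambda \geq 0$ and $(v,x)$ in the cone (with the case $\lambda = 0$ handled separately, since we only need $\psi$ defined for $x > 0$), the quotient $\lambda v /(\lambda x)$ equals $v/x$, so the scalar $\lambda x$ outside carries the homogeneity. I would write this out in one line.

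The substantive step is subadditivity, and the key is to recognize $\psi$ as the \emph{perspective} of $\phi$, so that subadditivity is just convexity in disguise. Given $(v_1,x_1),(v_2,x_2) \in K \times (0,\infty)$, I would set $\lambda = x_1/(x_1+x_2) \in (0,1)$ and write
\begin{equation*}
\frac{v_1+v_2}{x_1+x_2} \;=\; \lambda \cdot \frac{v_1}{x_1} + (1-\lambda)\cdot \frac{v_2}{x_2},
\end{equation*}
which is a convex combination of points of $K$ (and lies in $K$ because $K$ is a convex cone). Applying convexity of $\phi$ and then multiplying through by $x_1+x_2$ yields
\begin{equation*}
(x_1+x_2)\,\phi\!\left(\frac{v_1+v_2}{x_1+x_2}\right) \;\leq\; x_1\,\phi\!\left(\frac{v_1}{x_1}\right) + x_2\,\phi\!\left(\frac{v_2}{x_2}\right),
\end{equation*}
which is exactly $\psi(v_1+v_2, x_1+x_2) \leq \psi(v_1,x_1) + \psi(v_2,x_2)$.

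There is no real obstacle here; the only thing to be a little careful about is that $K \times (0,\infty)$ really is a convex cone and that the argument $(v_1+v_2)/(x_1+x_2)$ lies in $K$, both of which follow from $K$ being a convex cone. With positive homogeneity and subadditivity in hand, $\psi$ is sublinear on $K \times (0,\infty)$ by definition, completing the proof.
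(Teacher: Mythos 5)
Your proposal is correct and follows essentially the same route as the paper: positive homogeneity by direct cancellation of $\lambda$ in the quotient, and subadditivity by writing $\frac{v_1+v_2}{x_1+x_2}$ as the convex combination $\frac{x_1}{x_1+x_2}\frac{v_1}{x_1}+\frac{x_2}{x_1+x_2}\frac{v_2}{x_2}$ and applying convexity of $\phi$. Your explicit remarks on the $\lambda=0$ case and on why the argument stays in $K$ are minor additions the paper leaves implicit.
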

   
\begin{proof}
For $\lambda>0$, we have $\psi(\lambda(v,x))=\lambda x \phi (\frac{\lambda v}{\lambda x})=\lambda x \phi (\frac{v}{x})=\lambda \psi(v,x)$. 
Next, let $v_1,v_2\in K$ and  $x_1,x_2\in (0,\infty)$. Then by convexity of $\phi$ we have 
$$\phi\Big(\frac{v_1+v_2}{x_1+x_2}\Big)=\phi\Big(\frac{x_1v_1 }{x_1(x_1+x_2)}+ \frac{x_2v_2}{x_2(x_1+x_2)}\Big)\leq \frac{x_1}{x_1+x_2}\phi\Big(\frac{v_1}{x_1}\Big)+\frac{x_2}{x_1+x_2}\phi\Big(\frac{v_2}{x_2}\Big).$$
It follows that $\psi$ is a sublinear functional on $K\times  (0,\infty)$.
\end{proof}

Due to issues with convergence, we have avoided considering $x=0$ in the above proposition, whence the restriction to $ (0,\infty)$. However, one can consider $x\rightarrow 0^+$ to obtain the recession function $\phi_\infty$. Note that the converse of the above proposition is immediate; any sublinear functional on a convex set is automatically convex on that set. 

\begin{proposition}\label{thm:Lipseq}
 Let $K$ be a convex subset of $\mathbb{R}^n$. For any continuous convex  nonnegative functional $\phi$ on $K$, there exists an increasing sequence of Lipschitz convex nonnegative functionals $\{\phi_k\}_{k=1}^\infty$ that converges pointwise to it on $K$. If further, $K$ is a convex cone and $\phi$ is sublinear, then $\{\phi_k\}_{k=1}^\infty$ can be taken to be sublinear. 
\end{proposition}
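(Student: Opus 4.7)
The plan is to use the standard Moreau--Yosida (inf-convolution) regularization. For each $k \in \mathbb{N}$, I would define
$$\phi_k(x) := \inf_{y \in K} \{\phi(y) + k\|x - y\|\}, \qquad x \in K,$$
using any fixed norm $\|\cdot\|$ on $\mathbb{R}^n$. Taking $y = x$ gives $\phi_k(x) \le \phi(x) < \infty$, and because $\phi \ge 0$ and $\|x-y\| \ge 0$, we get $\phi_k \ge 0$. Monotonicity $\phi_k \le \phi_{k+1}$ is immediate because the quantity being infimized grows pointwise in $k$.

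Next, I would check the structural properties. The Lipschitz estimate $|\phi_k(x)-\phi_k(x')| \le k\|x-x'\|$ follows from $\bigl|\|x-y\|-\|x'-y\|\bigr|\le \|x-x'\|$ applied under the infimum. Convexity follows from the standard inf-convolution argument: for $x_1,x_2 \in K$ and $\lambda \in (0,1)$, one tests the infimum defining $\phi_k(\lambda x_1 + (1-\lambda)x_2)$ against the candidates $y = \lambda y_1 + (1-\lambda)y_2 \in K$, uses convexity of $\phi$ and the triangle inequality for the norm, and then takes the infimum over $y_1$ and $y_2$ independently. For the sublinear refinement, with $K$ a convex cone and $\phi$ positively homogeneous, the substitution $y = \lambda z$ (a bijection $K \to K$ for $\lambda > 0$) yields
$$\phi_k(\lambda x) = \inf_{z \in K} \{\lambda\phi(z) + \lambda k\|x-z\|\} = \lambda \phi_k(x);$$
positive homogeneity together with convexity forces subadditivity, so $\phi_k$ is sublinear as well.

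The step I expect to be the main obstacle is the pointwise convergence $\phi_k(x) \nearrow \phi(x)$, which is the only place where continuity of $\phi$ (rather than mere convexity) is essential. I would argue as follows: given $\epsilon > 0$, choose $\delta > 0$ so that $y \in K$ with $\|y-x\| < \delta$ forces $\phi(y) > \phi(x) - \epsilon$. Then any $y$ with $\|y-x\| < \delta$ contributes at least $\phi(x) - \epsilon$ to the infimum, while any $y$ with $\|y-x\| \ge \delta$ contributes at least $k\delta$; once $k \ge \phi(x)/\delta$ this gives $\phi_k(x) \ge \phi(x) - \epsilon$, and since $\epsilon$ was arbitrary the convergence follows. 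Everything else reduces to a routine verification once the inf-convolution formula is in hand.
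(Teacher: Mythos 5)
Your proof is correct and complete. Note that the paper states this proposition \emph{without proof} (it moves directly to the remark that sublinear functionals on closed cones are Lipschitz), so there is no in-paper argument to compare against; your Pasch--Hausdorff / Moreau--Yosida envelope $\phi_k(x)=\inf_{y\in K}\{\phi(y)+k\|x-y\|\}$ is the standard construction and every step you outline goes through. Two points are worth making explicit in a write-up: first, the nonnegativity of $\phi$ is precisely what guarantees that the infimum is finite (for a general convex $\phi$ on an unbounded $K$ the inf-convolution can be identically $-\infty$, in which case the Lipschitz estimate is vacuous), so your observation $\phi_k\ge 0$ is doing real work and should be stated before the Lipschitz step; second, your homogeneity computation via the substitution $y=\lambda z$ covers $\lambda>0$, and if one's convention has $0\in K$ the remaining case follows from $0\le\phi_k(0)\le\phi(0)=0$. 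With those remarks in place the argument is a clean, self-contained proof of a statement the paper leaves to the reader.
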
 


If $K$ is a closed convex cone in $\mathbb{R}^n$, then in fact every continuous sublinear functional $\phi:K\to \mathbb{R}$ 
is Lipschitz.

To prove Theorem \ref{contmaj}, we require the generalization of Jensen's inequality to the multivariate case; see \cite[Proposition 16.C.1]{MOA}. 

\begin{theorem}\label{thm:Jensen}(multivariate Jensen's inequality)   Let $(X, \mu)$ be a probability measure space. Let $\phi:\mathbb{R}^n\to \mathbb{R}$ be a convex function and let $f\in L^1(X, \mathbb{R}^n)$. Then   $\phi(\int_{X}f(x) d\mu(x) )\le \int_{X}\phi(f(x)) d\mu(x)$.
\end{theorem}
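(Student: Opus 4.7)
The plan is to reduce the statement to the one-dimensional case by dominating $\phi$ from below by an affine function that agrees with it at $y_0 := \int_X f\,d\mu$, and then integrating that affine lower bound. This mimics the classical scalar proof of Jensen's inequality, with the supporting tangent line replaced by a supporting hyperplane.

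First I would note that $y_0 \in \mathbb{R}^n$ is well defined because $f \in L^1(X, \mathbb{R}^n)$. Since $\phi$ is convex on the open convex set $\mathbb{R}^n$, it is automatically continuous, and by the supporting hyperplane theorem applied to the epigraph of $\phi$ at the boundary point $(y_0,\phi(y_0))$, there exists a subgradient $v \in \mathbb{R}^n$ with
$$\phi(y) \;\ge\; \phi(y_0) + \langle v,\, y - y_0\rangle \qquad \text{for all } y \in \mathbb{R}^n.$$
Substituting $y = f(x)$ pointwise and integrating against $\mu$, linearity of the integral together with $\mu(X)=1$ yields
$$\int_X \phi(f(x))\,d\mu(x) \;\ge\; \phi(y_0) + \left\langle v,\; \int_X f\,d\mu - y_0\right\rangle \;=\; \phi(y_0) \;=\; \phi\!\left(\int_X f\,d\mu\right),$$
which is exactly the claimed inequality.

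The main technical obstacle is justifying that the integral on the right-hand side makes sense and that the subgradient exists. Measurability of $\phi \circ f$ is immediate from continuity of $\phi$ and measurability of $f$, and the affine lower bound displayed above shows that the negative part $(\phi \circ f)_-$ lies in $L^1(X,\mu)$ (since $f \in L^1(X,\mathbb{R}^n)$), so $\int_X \phi(f)\,d\mu$ is well defined in $(-\infty,+\infty]$ and the inequality is meaningful even when the right-hand side is infinite. The existence of a subgradient at every interior point of the domain of a finite convex function is a standard fact in finite-dimensional convex analysis, and is really the only non-trivial ingredient; once it is available, everything else is a one-line computation.
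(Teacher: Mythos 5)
Your proof is correct: the supporting-hyperplane (subgradient) argument is the standard proof of multivariate Jensen, and your care with the measurability of $\phi\circ f$ and the integrability of its negative part actually makes the statement meaningful even when $\int_X\phi(f)\,d\mu=+\infty$, which the bare statement glosses over. The paper itself gives no proof, citing only \cite[Proposition 16.C.1]{MOA}, so there is nothing to contrast with; your argument is exactly what that reference supplies.
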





If $\phi$ is a sublinear function, we no longer require the measure to be a probability measure. 

\begin{theorem}[Roselli-Willem inequality]\label{RW}\cite[Theorem 6]{RW} Let $(X, \mu)$ be an arbitrary measure space. Let $\phi:\mathbb{R}^n\to \mathbb{R}$ be a sublinear function and let $f\in L^1(X, \mathbb{R}^n)$.   Then   $\phi(\int_{X}f(x) d\mu(x) )\le \int_{X}\phi(f(x)) d\mu(x)$.
\end{theorem}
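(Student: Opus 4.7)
The plan is to represent the sublinear functional $\phi$ as a pointwise supremum of linear functionals it dominates, and then exchange this supremum with the integral, exploiting the fact that linear functionals commute with integration without needing a probability-measure normalization.

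First I would check that both sides of the asserted inequality are well-defined. Since $f\in L^1(X,\mathbb{R}^n)$, each component is integrable and $v:=\int_X f\,d\mu\in\mathbb{R}^n$ makes sense. Being a real-valued sublinear functional on all of $\mathbb{R}^n$, $\phi$ is convex and everywhere finite, hence continuous, and in fact Lipschitz by the remark following Proposition~\ref{thm:Lipseq} applied with $K=\mathbb{R}^n$. The resulting bound $|\phi(y)|\le L|y|$ makes $\phi\circ f$ measurable and dominated by $L|f|\in L^1(X)$, so the right-hand integral is a finite real number.

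The heart of the argument is a one-point Hahn-Banach step: for the specific vector $v=\int_X f\,d\mu$, I construct a linear $\ell:\mathbb{R}^n\to\mathbb{R}$ satisfying $\ell(v)=\phi(v)$ and $\ell(y)\le\phi(y)$ for every $y\in\mathbb{R}^n$. Define $\ell$ initially on the line $\mathbb{R} v$ by $\ell(tv):=t\phi(v)$. For $t\ge 0$ this agrees with $\phi(tv)$ by positive homogeneity; for $t<0$ the required inequality $t\phi(v)\le\phi(tv)=-t\,\phi(-v)$ is equivalent to $\phi(v)+\phi(-v)\ge 0$, which follows from subadditivity via $0=\phi(0)\le\phi(v)+\phi(-v)$. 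The classical Hahn-Banach theorem with $\phi$ as the sublinear majorant then extends $\ell$ to all of $\mathbb{R}^n$ while preserving $\ell\le\phi$.

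Once $\ell$ is in hand, the conclusion is immediate: $\ell$ is a fixed linear combination of coordinates, so it commutes with componentwise integration, yielding
$$\phi(v)=\ell(v)=\ell\Bigl(\int_X f\,d\mu\Bigr)=\int_X \ell(f(x))\,d\mu(x)\le\int_X \phi(f(x))\,d\mu(x).$$
There is no substantial obstacle in the argument; what is worth flagging is precisely why the probability-measure hypothesis of Theorem~\ref{thm:Jensen} can be dropped here. Because $\ell$ is linear rather than merely convex, the rescaling needed to pass between $\int f\,d\mu$ and its $\mu$-average becomes unnecessary, so the total mass $\mu(X)$ disappears from the argument entirely.
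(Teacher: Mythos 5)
Your proof is correct. Note that the paper does not prove this statement at all: it is quoted as Theorem 6 of Roselli and Willem's paper \cite{RW}, so there is no in-paper argument to compare against. What you have written is a clean, self-contained derivation by the standard route: the one-point Hahn--Banach step producing a linear $\ell\le\phi$ with $\ell(v)=\phi(v)$ at $v=\int_X f\,d\mu$ is exactly the supporting-functional argument that underlies the cited result, and your verification that $\ell(tv)=t\phi(v)\le\phi(tv)$ for $t<0$ (via $0=\phi(0)\le\phi(v)+\phi(-v)$) is the only point where care is needed; you handle it correctly. The integrability of $\phi\circ f$ via the Lipschitz bound $|\phi(y)|\le L|y|$ (using the remark after Proposition~\ref{thm:Lipseq} with $K=\mathbb{R}^n$ and $\phi(0)=0$) is also needed and correctly supplied, and your closing observation pinpoints precisely why the probability-measure normalization of Theorem~\ref{thm:Jensen} is dispensable: the supporting functional is linear, not merely affine, so no averaging is required. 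The argument would serve as a short proof of Theorem~\ref{RW} if the authors wished to make the paper self-contained on this point.
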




The related concept of $f$-divergence (which we call $\phi$-divergence since $\phi$ is convex) was introduced by Csisz{\'a}r \cite{Csiszar} and was studied extensively by statisticians \cite[Chapter 2]{CohenBook} and \cite{CD05,  JC15, LV06, MM13, SV}.  Similar concepts with different names have also appeared in the Physics literature  \cite[Chapter 6]{QIStext} and \cite{Morimoto, Zan}.

\begin{definition} Let $(X,\mu)$ be a measure space and $V$ be a real vector space.  Let $\phi$ be a real valued convex function on $V$.  Let $f:X \to V$ and let $h:X\to (0,\infty)$ with all functions being measurable.  Then the $\phi$-divergence of $f$ with respect to $h$ is $\int_X h\phi (\frac{f}{h}) \,d\mu$.
\end{definition}
 
The following result is useful in relating $\phi$-divergence inequalities with sublinear functional integral inequalities:

\begin{theorem} \label{thm:all} Let $(X,\mu)$ and $(Y,\nu)$ be measure spaces, $f\in L^1(X,K)$, $g\in L^1(Y,K)$, $h\in  L^1(X,(0,\infty))$ and  $k\in L^1(Y,(0,\infty))$, where $K$ is a convex cone. Define  $F:X \to K \times \mathbb{R}$ and $G:Y \to K \times \mathbb{R}$  as $F(x)=(f(x),h(x))$ and $G(y)=(g(y),k(y))$.      The following are equivalent. 
\begin{enumerate}
\item\label{i:fdiv} The $\phi$-divergence of $f$ with respect to $h$ is less than or equal to that of $g$ with respect to $k$; that is,  $\int_X \phi (\frac{f}{h})h d\mu\leq  \int_Y \phi (\frac{g}{k})k d\nu$, for all convex functions $\phi:K\to \mathbb{R}$. 
\item\label{i:sublin} $\int_X \psi (F(x))d\mu(x)\leq \int_Y \psi (G(y))d\nu(y)$ for all sublinear functionals $\psi:K\times (0,\infty)\to\mathbb{R}$.
\end{enumerate}  \end{theorem}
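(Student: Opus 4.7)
The plan is to reduce the equivalence to an exact bijection between convex functionals on $K$ and sublinear functionals on $K\times (0,\infty)$, extending Proposition \ref{prop:sublinHLP}. The forward direction is already supplied by that proposition: any convex $\phi:K\to\mathbb{R}$ produces a sublinear $\psi_\phi(v,x):=x\phi(v/x)$. For the reverse, given a sublinear $\psi$ on $K \times (0,\infty)$, I would set $\phi_\psi(v):=\psi(v,1)$ and check convexity by expanding a convex combination via subadditivity and positive homogeneity of $\psi$:
\[
\phi_\psi(tv_1 + (1-t)v_2) = \psi(tv_1 + (1-t)v_2,\, 1) \leq \psi(tv_1, t) + \psi((1-t)v_2, 1-t) = t\phi_\psi(v_1) + (1-t)\phi_\psi(v_2).
\]
Positive homogeneity also yields $\psi(v, x) = x\psi(v/x, 1) = x\phi_\psi(v/x)$ for all $x > 0$, so $\psi = \psi_{\phi_\psi}$; thus the assignments $\phi \mapsto \psi_\phi$ and $\psi \mapsto \phi_\psi$ are mutually inverse.

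With the bijection in hand, I would observe that the two integrands coincide pointwise: for each $x \in X$,
\[
h(x)\,\phi\!\left(\tfrac{f(x)}{h(x)}\right) = \psi_\phi(f(x), h(x)) = \psi_\phi(F(x)),
\]
and analogously on $Y$ with $g, k, G$ in place of $f, h, F$. Consequently, the inequality in (\ref{i:fdiv}) for a given convex $\phi$ is the very same inequality as the one in (\ref{i:sublin}) for its associated sublinear $\psi_\phi$. Since every sublinear functional on $K \times (0,\infty)$ arises this way, quantifying over all convex $\phi$ is the same as quantifying over all sublinear $\psi$, so the two conditions are equivalent.

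I do not foresee any substantial obstacle. The main step requiring care is verifying that $\psi \mapsto \phi_\psi$ really lands in the class of convex functionals and is inverse to $\phi \mapsto \psi_\phi$; once this is established, the theorem is essentially a change-of-variables tautology at the level of integrands. Measurability and integrability pass through automatically: since $h$ and $k$ are strictly positive, $f/h$ and $g/k$ are well-defined measurable functions, and since convex and sublinear functionals are continuous on the relative interior of their domains, the compositions $\phi \circ (f/h)$ and $\psi \circ F$ are measurable.
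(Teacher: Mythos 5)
Your proposal is correct and follows essentially the same route as the paper: one direction uses Proposition \ref{prop:sublinHLP} to pass from a convex $\phi$ to the sublinear $\psi(v,x)=x\phi(v/x)$, and the other restricts a sublinear $\psi$ to $\phi(v)=\psi(v,1)$, with positive homogeneity identifying the integrands pointwise. Your explicit observation that these two assignments are mutually inverse (and your verification that $\psi(\cdot,1)$ is convex) is a minor tidying of the same argument, not a different method.
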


\begin{proof}

 (\ref{i:fdiv})$\Rightarrow $ (\ref{i:sublin}):
 
 Let $\psi(v,t) :K \times (0,\infty) \to \mathbb{R}$ be a sublinear functional. Let $\phi(v)=\psi(v,1)$ for all $v\in K$.  Then $\phi$ is  a real valued convex function on $K$. We can then use the sublinearity of $\psi$ to prove the following:
 
$\begin{array}{rcl}
\int_X \psi (F(x))d\mu(x)
                         & = & \int_X \phi (\frac{f(x)}{h(x)})h(x)d\mu(x)\\
                         & \le & \int_Y \phi (\frac{g(x)}{k(x)})k(x)d\nu(x)\\
                         & = & \int_Y \psi (G(x))d\nu(x).
 \end{array}$ 
 
  (\ref{i:sublin})$\Rightarrow $ (\ref{i:fdiv}): This implication is a straightforward application of Proposition \ref{prop:sublinHLP}. Let $\phi $ be any real-valued convex functional on $K$.  Then $\psi (v,x)=x\phi(\frac{v}{x})$ is a sublinear functional on $K\times (0,\infty)$, and we have
 
  $\begin{array}{rcl}
  
  \int_X \phi (\frac{f(x)}{h(x)})h(x)d\mu(x) & = &   \int_X \psi(f(x),h(x))d\mu(x)\\
                                              & \le & \int_Y \psi (g(y),k(y))d\nu(y)\\
                                               & = & \int_Y \phi (\frac{g(y)}{k(y)})k(y)d\nu(y).
   \end{array}$ 
   \end{proof}

\section{A generalization of matrix majorization}

With the notion of a stochastic operator from $L^1(Y, \nu)$ to  $L^1(X, \mu)$, we can generalize the definition of matrix majorization:

\begin{definition}\label{defn:matrixmaj}
Let $(X,\mu)$ and $(Y,\nu)$ be measure spaces.  Let $f=(f_1, f_2, \dots, f_n)\in L^1(X,\mathbb{R}^n)$ and $g=(g_1, g_2, \dots, g_n)\in L^1(Y,\mathbb{R}^n)$.  Then we say that $f$ is     \emph{matrix majorized} by $g$, denoted $f\prec_M g$, if there exists a stochastic operator $S$ such that $f=S(g)$; i.e., $f_k=Sg_k$ for all $k=1, \dots, n$. 
\end{definition}

It is straightforward to check that  matrix majorization between measurable functions in $L^1$ is a reflexive, transitive relation and therefore is a preorder, which generalizes the same result in  \cite[Theorem 3.3]{dahl1999} for matrix majorization on matrices.

The term matrix majorization was coined by Dahl \cite{dahl1999}.  To see that our formulation is a generalization of Dahl's, we now restrict ourselves to the special case where $X=\{ 1,2,...,m\}$ and $Y=\{ 1,2,...,p\}$ are finite sets, $\mu$ and $\nu$ are counting measures.  We can represent each function $f=(f_1,f_2,...,f_n)\in L^1(X,\mathbb{R}^n)$ as an $n$ by $m$ 
matrix $A_{f}$ whose $k$th column is $f(k)$.   We can see that $f$ is matrix majorized by $g$ if there exists a row stochastic matrix $S$ such that $A_{f}=A_{g}S$; the later is Dahl's formulation of matrix majorization on matrices.


We now note that part of \cite[Theorem 3.3]{dahl1999} can now be rephrased as follows:

\begin{theorem} Let $X=\{ 1,2,...,m\}$, $Y=\{ 1,2,...,p\}$, $f\in L^1(X,\mathbb{R}^n)$ and $g\in L^1(Y,\mathbb{R}^n)$. Then $f$ is matrix  majorized by $g$ if and only if $\sum_{k=1}^m \phi (f(k)) \le \sum_{k=1}^p \phi (g(k))$ for all sublinear functionals  $\phi: \mathbb{R}^n\to \mathbb{R} $.    \end{theorem}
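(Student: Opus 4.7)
The plan is to prove the forward direction by direct computation and the backward direction by a hyperplane separation argument applied to the polytope of achievable matrices.

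For the easy direction $(\Rightarrow)$: assume $f\prec_M g$, so in the finite-dimensional counting-measure setting, there exists a $p\times m$ row-stochastic matrix $S$ with $f(i)=\sum_{j=1}^p S_{ji}\,g(j)$ for each $i$. Given any sublinear $\phi$, apply subadditivity and positive homogeneity (using $S_{ji}\ge 0$) to obtain
\[
\sum_{i=1}^m \phi(f(i)) = \sum_{i=1}^m \phi\!\Bigl(\sum_{j=1}^p S_{ji}\,g(j)\Bigr) \le \sum_{i=1}^m \sum_{j=1}^p S_{ji}\,\phi(g(j)) = \sum_{j=1}^p \phi(g(j)),
\]
where the last equality uses $\sum_{i=1}^m S_{ji}=1$ for each $j$. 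This is essentially the argument foreshadowed by Lemma \ref{absineq} but for an arbitrary sublinear $\phi$ in place of the absolute value.

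For the harder direction $(\Leftarrow)$, I will argue by contrapositive. Consider the set
\[
C=\{A_g S : S\in\mathbb{R}^{p\times m}\text{ is row-stochastic}\}\subset \mathbb{R}^{n\times m}.
\]
This is the image of a compact convex polytope under a linear map, hence a compact convex subset of $\mathbb{R}^{n\times m}$. If $f\not\prec_M g$, then $A_f\notin C$, so by the hyperplane separation theorem there is a matrix $L=(L_{\cdot 1},\ldots,L_{\cdot m})\in\mathbb{R}^{n\times m}$ (thought of as a linear functional via the trace inner product) and a scalar $\alpha$ with
\[
\sum_{k=1}^m \langle L_{\cdot k},\, f(k)\rangle \;>\; \alpha \;\ge\; \sup_{S} \sum_{k=1}^m \langle L_{\cdot k},\, (A_g S)_{\cdot k}\rangle.
\]

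The key step is to convert this separating functional into a sublinear $\phi$ that violates the hypothesis. Expanding the right-hand supremum row by row, the maximization $\sup_{S_{j\cdot}\in\Delta_m}\sum_{k=1}^m S_{jk}\,\langle L_{\cdot k}, g(j)\rangle$ is attained at a vertex of the simplex, giving $\max_{k}\langle L_{\cdot k}, g(j)\rangle$. So define
\[
\phi(x) := \max_{1\le k\le m}\langle L_{\cdot k},\, x\rangle,
\]
which is sublinear on $\mathbb{R}^n$ as a pointwise maximum of finitely many linear functionals. Then $\sup_S \sum_k \langle L_{\cdot k},(A_gS)_{\cdot k}\rangle = \sum_{j=1}^p \phi(g(j))$, and trivially $\sum_{k=1}^m\langle L_{\cdot k},f(k)\rangle \le \sum_{k=1}^m \phi(f(k))$. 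Chaining these with the strict inequality above yields $\sum_k \phi(f(k))>\sum_j \phi(g(j))$, contradicting the hypothesis.

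The main obstacle is the second direction, specifically the passage from a \emph{linear} separating functional (which is what separation gives) to a \emph{sublinear} functional fitting the hypothesis; the trick of taking the pointwise max of the $m$ column-slices of $L$ is what bridges the two, and one must verify that this max precisely reproduces the LP supremum over row-stochastic matrices.
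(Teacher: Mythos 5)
Your proof is correct, but note that the paper does not actually supply an argument for this theorem: it is stated as a rephrasing of part of \cite[Theorem 3.3]{dahl1999} and proved by citation only. What you have written is therefore a self-contained reconstruction of the omitted proof rather than an alternative to one. The forward direction is the standard computation, and you are right that only subadditivity plus positive homogeneity is needed (the coefficients $S_{ji}$ multiplying $g(j)$ inside each $\phi$ need not form a convex combination; it is the row sums $\sum_{i}S_{ji}=1$ that are used only after interchanging the order of summation). The converse via strict separation of $A_f$ from the compact convex set $C=\{A_gS\}$, decoupling the resulting linear program over the product of simplices row by row, and packaging the column slices of the separating functional into $\phi(x)=\max_k\langle L_{\cdot k},x\rangle$ is essentially the mechanism of Dahl's own proof (and of the classical Blackwell--Sherman--Stein comparison arguments), so the two routes coincide in substance. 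Two small points worth making explicit: the separation step needs $C$ to be closed, which you get for free since $C$ is the continuous image of the compact set of row-stochastic matrices; and the identification of the operator-theoretic Definition \ref{defn:matrixmaj} with the matrix formulation $A_f=A_gS$ in the counting-measure setting, which you use implicitly in both directions, is exactly the content of the paragraph preceding the theorem in the paper, so it is legitimate to invoke it.
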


This suggests the following one-sided extension to the general case. 

\begin{theorem} \label{contmaj} Let $(X,\mu)$ and $(Y,\nu)$ be measure spaces, $f\in L^1(X,\mathbb{R}^n)$ and $g\in L^1(Y,\mathbb{R}^n)$. If there exists a stochastic kernel $S(x,y): X\times Y\rightarrow [0,\infty)$ such that $f(x)=\int_Y S(x,y)g(y)d\nu(y)$ then $\int_X \phi (f(x))d\mu(x) \le \int_Y \phi (g(y))d\nu(y)$ for all sublinear functionals $\phi: \mathbb{R}^n\to \mathbb{R}$.    \end{theorem}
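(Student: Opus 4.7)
The plan is to combine the Roselli--Willem inequality (Theorem \ref{RW}) with the positive homogeneity of $\phi$, applied fiberwise in $x$, and then exchange the order of integration using Fubini.

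First, I would verify the integrability needed to apply Roselli--Willem inside the integral defining $f(x)$. Since $S(\cdot,\cdot) \ge 0$, Tonelli's theorem gives
\begin{equation*}
\int_X \int_Y S(x,y)|g(y)|\,d\nu(y)\,d\mu(x) \;=\; \int_Y |g(y)|\int_X S(x,y)\,d\mu(x)\,d\nu(y) \;=\; \int_Y |g(y)|\,d\nu(y) \;<\;\infty,
\end{equation*}
so for $\mu$-a.e.\ $x$ the function $y\mapsto S(x,y)g(y)$ lies in $L^1(Y,\mathbb{R}^n)$ with integral $f(x)$. Applying Theorem \ref{RW} fiberwise with the sublinear $\phi$ therefore gives, for a.e.\ $x$,
\begin{equation*}
\phi(f(x)) \;=\; \phi\!\left(\int_Y S(x,y)g(y)\,d\nu(y)\right) \;\le\; \int_Y \phi\bigl(S(x,y)g(y)\bigr)\,d\nu(y).
\end{equation*}
Because $S(x,y)\ge 0$, positive homogeneity of $\phi$ collapses the integrand to $S(x,y)\phi(g(y))$.

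Next I would integrate both sides over $X$. Any sublinear functional on $\mathbb{R}^n$ satisfies $|\phi(v)|\le C|v|$ for some constant $C$ (take $C$ to be the maximum of $\phi$ and $\phi(-\cdot)$ on the unit sphere, using continuity of finite-valued sublinear functionals in finite dimensions); hence $\phi\circ f\in L^1(X)$, $\phi\circ g\in L^1(Y)$, and the double integral $\int_X\int_Y S(x,y)|\phi(g(y))|\,d\nu(y)\,d\mu(x)$ is bounded by $C\int_Y|g|\,d\nu$, which is finite. Fubini's theorem then allows me to interchange the order and conclude
\begin{equation*}
\int_X \phi(f(x))\,d\mu(x) \;\le\; \int_Y \phi(g(y)) \int_X S(x,y)\,d\mu(x)\,d\nu(y) \;=\; \int_Y \phi(g(y))\,d\nu(y),
\end{equation*}
where the final equality uses the stochastic kernel condition $\int_X S(x,y)\,d\mu(x)=1$ for $\nu$-a.e.\ $y$.

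The substantive work is concentrated in the fiberwise Jensen-type bound, which is supplied by Theorem \ref{RW}; the remaining obstacle is purely bookkeeping, namely confirming the integrability estimates so that Roselli--Willem applies in $y$ for almost every $x$ and so that Fubini can be invoked on the iterated integral. The bound $|\phi(v)|\le C|v|$ handles both of these at once and is the only nontrivial input beyond the cited theorem.
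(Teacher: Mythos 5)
Your proof is correct and follows essentially the same route as the paper's: apply the Roselli--Willem inequality fiberwise, use positive homogeneity of $\phi$ to pull out the nonnegative factor $S(x,y)$, and then exchange the order of integration via Fubini together with the stochastic kernel condition $\int_X S(x,y)\,d\mu(x)=1$. The only difference is that you explicitly verify the integrability hypotheses (via Tonelli and the bound $|\phi(v)|\le C|v|$) that the paper's proof leaves implicit, which is a welcome addition but not a change of method.
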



\begin{proof}
 Suppose  there exists a stochastic kernel $S(x,y): X\times Y\rightarrow [0,\infty)$ such that $f(x)=\int_Y S(x,y)g(y)d\mu(y)$.  Let $\phi :\mathbb{R}^n\to \mathbb{R}$ be sublinear. 
 Hence by using Theorem \ref{RW} and Fubini's Theorem

$\begin{array}{rcl}
\int_X \phi (f(x))d\mu(x)& = &\int_X \phi (\int_Y S(x,y)g(y)d\nu(y))d\mu(x) \\ 
   &\le & \int_X \int_Y \phi (S(x,y)g(y))d\nu(y)d\mu(x)\\ 
   & = &  \int_X \int_Y S(x,y)\phi(g(y))d\nu(y)d\mu(x)\\ 
   & = & \int_Y (\int_X S(x,y)d\mu(x))\phi(g(y))d\nu(y)\\ 
   & = & \int_Y \phi(g(y))d\nu(y),
   \end{array}$
   
as desired.
\end{proof}

Note that, if we take $X=Y=[0,1]$ in Theorem \ref{contmaj}, then this is nearly the definition of mixing distance \cite[Definition 1a]{RSS1978}, except that the authors of \cite{RSS1978} take $\phi$ to be any convex functions, or certain subsets thereof, whereas we are working with sublinear (positively homogeneous convex) functionals in accordance with \cite{dahl1999}. This similarity hints at a connection between matrix majorization and the mixing distance. 

\begin{lemma}\label{seqpart}  Let $(X,\mu)$ be a $\sigma$-finite measure space and let $f\in L^1(X)$.  Then there exists a sequence of partitions $\{\mathcal{P}_n\}_{n=1}^\infty$ of $X$ into disjoint sets of finite measure such that $\{M_{\mathcal{P}_n}f\}_{n=1}^\infty$ converges to $f$ in the $L^1$ norm. \end{lemma}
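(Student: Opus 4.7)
The plan is to combine density of simple functions in $L^1$ with the fact that $M_{\mathcal{P}}$ is a contraction on $L^1$ (which follows from Lemma \ref{absineq}, since $M_{\mathcal{P}}$ is a stochastic operator). Concretely, for each positive integer $n$ I will produce a partition $\mathcal{P}_n$ with $\|M_{\mathcal{P}_n}f - f\|_1 < 1/n$, whence the sequence converges.

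First, fix $n\in\mathbb{N}$. Since simple functions supported on sets of finite measure are dense in $L^1(X,\mu)$, I choose $s=\sum_{j=1}^{N}c_j\chi_{A_j}$ with the $A_j$ disjoint, of finite measure, and $\|f-s\|_1 < \frac{1}{2n}$. Next, I use $\sigma$-finiteness to write $X\setminus\bigcup_{j=1}^{N}A_j$ as a countable disjoint union $\bigcup_{k=1}^{\infty}B_k$ of sets of finite measure, and set
\[
\mathcal{P}_n := \{A_1,\dots,A_N\}\cup\{B_k\}_{k=1}^{\infty},
\]
which is a partition of $X$ into disjoint measurable sets of finite measure as required by Definition \ref{exp}.

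The key property of this partition is that $M_{\mathcal{P}_n}s = s$: averaging the constant $c_j$ over $A_j$ returns $c_j$, while on each $B_k$ the function $s$ is identically zero. Using this together with Lemma \ref{absineq} applied to the stochastic operator $M_{\mathcal{P}_n}$, I estimate
\[
\|M_{\mathcal{P}_n}f - f\|_1 \le \|M_{\mathcal{P}_n}(f-s)\|_1 + \|M_{\mathcal{P}_n}s - s\|_1 + \|s-f\|_1 \le 2\|f-s\|_1 < \frac{1}{n}.
\]
Letting $n\to\infty$ yields the desired $L^1$ convergence.

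There is no serious obstacle; the only point requiring care is that the partition elements must all have finite measure (so that the averages $a_i$ in Definition \ref{exp} make sense and $M_{\mathcal{P}_n}$ is a legitimate stochastic operator), which is precisely why $\sigma$-finiteness is invoked to dissect the complement of the support of $s$.
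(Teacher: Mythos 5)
Your proof is correct, but it takes a genuinely different route from the paper's. The paper builds $\mathcal{P}_n$ directly from the level sets of $f$ itself: it cuts the value range $[-n,n)$ into intervals of length $1/n$, adds the two tail sets $\{f<-n\}$ and $\{f\ge n\}$, uses $\sigma$-finiteness to refine any infinite-measure cell, and then asserts that $M_{\mathcal{P}_n}f\to f$ is ``easy to verify.'' You instead approximate $f$ by a simple function $s$ with $\|f-s\|_1<\tfrac{1}{2n}$, take the partition adapted to $s$ (its level sets plus a $\sigma$-finite dissection of the complement of its support), and exploit the two facts that $M_{\mathcal{P}_n}s=s$ and that $M_{\mathcal{P}_n}$ is an $L^1$-contraction (via Lemma \ref{absineq}), yielding $\|M_{\mathcal{P}_n}f-f\|_1\le 2\|f-s\|_1$. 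Your version buys a genuinely short verification: the entire estimate is a three-term triangle inequality, with all the measure theory outsourced to the standard density of integrable simple functions. This is a real advantage, because the paper's ``easy to verify'' step is not entirely immediate when $\mu(X)=\infty$: the middle cells of the paper's partition can have infinite total measure (e.g.\ the cell whose value range contains $0$), so the naive bound $\tfrac{1}{n}\mu(\{|f|<n\})$ is useless and one must treat the cells with values near zero separately from the rest. What the paper's construction buys in exchange is that the partition is canonical in $f$ and $n$ (no auxiliary choice of approximant), which is harmless here but is the sort of explicitness that makes the refinement argument of Lemma \ref{seqpart2} easy to visualize. One cosmetic remark on your write-up: when some $A_j$ has measure zero, Definition \ref{exp} assigns the average $0$ rather than $c_j$, so $M_{\mathcal{P}_n}s=s$ should be read as an almost-everywhere identity; this costs nothing in $L^1$.
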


\begin{proof}  Let $P_n=\{E_k \}_{k=0}^{2n^2+1}$ where $E_0=\{ x\in X: f(x)<-n\}$  and $E_j=\{x\in X: f(x)\in [-n+\frac{j-1}{n},-n+\frac{j}{n})\} $ if $1\le j\le 2n^2$ and $E_{2n^2+1}=\{ x\in X: f(x)\ge n\}$.  If $E_k$ has infinite measure for some $k\in \{0, \dots, 2n^2+1\}$, since $X$ is $\sigma$-finite, $E_k$ is a countable disjoint union of sets of finite measure. Replace every such $E_k$ in the partition with the sets of finite measure.  It is then easy to verify that $\{M_{\mathcal{P}_n}f\}_{n=1}^\infty$ converges to $f$ in the $L^1$ norm. \end{proof}

We note that if  $\mathcal{P}=\{E_i\}_{i\in \mathbb{N}}$ and  $\mathcal{Q}=\{F_i\}_{j\in \mathbb{N}}$ are two partitions of $X$ into disjoint sets of finite measure, then we can form  the intersection partition $\mathcal{P}\cap \mathcal{Q}=\{E_i\cap F_j: i,j\in \mathbb{N}\}$. 

\begin{lemma}\label{seqpart2}  Let $(X,\mu)$ be a $\sigma$-finite measure space and let $V$ be a finite dimensional subspace of $L^1(X)$.  Then there exists a sequence of partitions $\{\mathcal{P}_n\}_{n=1}^\infty$ of $X$  into disjoint sets of finite measure such that $\{M_{\mathcal{P}_n}f\}_{n=1}^\infty$ converges to $f$ in the $L^1$ norm for all $f\in V$. \end{lemma}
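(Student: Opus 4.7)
The plan is to bootstrap from Lemma \ref{seqpart} using common refinements of partitions. Fix a basis $e_1,\dots,e_d$ of $V$. By Lemma \ref{seqpart}, for each $i\in\{1,\dots,d\}$ there is a sequence of partitions $\{\mathcal{P}_n^{(i)}\}_{n=1}^\infty$ of $X$ into disjoint sets of finite measure with $M_{\mathcal{P}_n^{(i)}} e_i \to e_i$ in $L^1$. I then define $\mathcal{P}_n$ as the intersection partition $\mathcal{P}_n^{(1)}\cap \mathcal{P}_n^{(2)}\cap\cdots\cap \mathcal{P}_n^{(d)}$, consisting of all nonempty $d$-fold intersections of cells. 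This is still a countable partition of $X$ into disjoint measurable sets of finite measure (each cell lies inside a cell of, say, $\mathcal{P}_n^{(1)}$).

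The key step is a refinement lemma for the operators $M_\mathcal{P}$: if $\mathcal{Q}$ is a partition refining $\mathcal{P}$ (every cell of $\mathcal{Q}$ is contained in some cell of $\mathcal{P}$), then for any $f\in L^1(X)$,
\[
\|f - M_\mathcal{Q} f\|_1 \le 2\,\|f - M_\mathcal{P} f\|_1.
\]
To see this, note that $M_\mathcal{P} f$ is constant on cells of $\mathcal{P}$, hence constant on cells of $\mathcal{Q}$, so $M_\mathcal{Q}(M_\mathcal{P} f)=M_\mathcal{P} f$. Linearity then gives $M_\mathcal{P} f - M_\mathcal{Q} f = M_\mathcal{Q}(M_\mathcal{P} f - f)$, and Lemma \ref{absineq} applied to the stochastic operator $M_\mathcal{Q}$ yields $\|M_\mathcal{P} f - M_\mathcal{Q} f\|_1 \le \|M_\mathcal{P} f - f\|_1$. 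The triangle inequality then gives the factor of $2$.

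Applying this with $\mathcal{Q}=\mathcal{P}_n$ and $\mathcal{P}=\mathcal{P}_n^{(i)}$, we obtain $\|e_i - M_{\mathcal{P}_n}e_i\|_1 \le 2\|e_i - M_{\mathcal{P}_n^{(i)}}e_i\|_1 \to 0$ as $n\to\infty$, for each $i=1,\dots,d$. For an arbitrary $f=\sum_{i=1}^d c_i e_i\in V$, linearity and the triangle inequality give
\[
\|f - M_{\mathcal{P}_n} f\|_1 \le \sum_{i=1}^d |c_i|\,\|e_i - M_{\mathcal{P}_n} e_i\|_1 \longrightarrow 0,
\]
establishing the claim.

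The main obstacle is really just formulating the refinement contraction $\|f-M_\mathcal{Q} f\|_1 \le 2\|f-M_\mathcal{P} f\|_1$ cleanly; the rest is bookkeeping. A minor technical point is verifying that the intersection of countably many countable partitions into sets of finite measure is again a countable partition into sets of finite measure, which follows since a countable product of countable index sets is countable and each intersection cell is contained in a cell of finite measure.
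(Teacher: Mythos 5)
Your proof is correct and follows essentially the same route as the paper: both reduce to Lemma \ref{seqpart} for individual functions and then pass to the common refinement (intersection) partition, the paper doing this by induction on $\dim V$ two partitions at a time while you intersect the $d$ partitions attached to a basis all at once. You additionally supply the refinement estimate $\Vert f-M_{\mathcal{Q}}f\Vert_1\le 2\Vert f-M_{\mathcal{P}}f\Vert_1$ via $M_{\mathcal{Q}}(M_{\mathcal{P}}f)=M_{\mathcal{P}}f$ and Lemma \ref{absineq}, which is exactly the step the paper dismisses as ``easy to see''; the only nit is your closing remark that a ``countable product of countable index sets is countable'' (false in general), but this is harmless since your construction only ever forms a finite, $d$-fold intersection.
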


\begin{proof} The proof is by induction on the dimension of $V$ with the base case being Lemma \ref{seqpart}.  Now suppose the induction hypothesis holds for dimension $n$ and let $V$ be a subspace of dimension $n+1$.  Let $S$ be a subspace of $V$ of dimension $n$; by the induction hypothesis there exists a sequence of partitions $\{\mathcal{P}_n\}_{n=1}^\infty$ of $X$ into disjoint sets of finite measure such that $\{M_{\mathcal{P}_n}f\}_{n=1}^\infty$ converges to $f$ in the $L^1$ norm for all $f\in S$.  Now let $g\in V$ with $g\not \in S$, then by Lemma \ref{seqpart} there exists a sequence of partitions $\{\mathcal{Q}_n\}_{n=1}^\infty$ into disjoint sets of finite measure such that $\{M_{\mathcal{Q}_n}g\}_{n=1}^\infty$ converges to $g$ in the $L^1$ norm.  It is easy to see that sequence of intersection partitions  $\{\mathcal{R}_n=\mathcal{P}_n\cap \mathcal{Q}_n\}_{n=1}^\infty$ now satisfies the property that that $\{M_{\mathcal{R}_n}f\}_{n=1}^\infty$ converges to $f$ in the $L^1$ norm for all $f\in V$.  \end{proof}

\begin{theorem} \label{sto:markov} Let $(X,\mu )$ and $(Y,\nu)$ be $\sigma$-finite measure spaces.  Let $S:L^{1}(Y)\to L^{1}(X)$ be a stochastic operator and let $V$ be a finite dimensional subspace of $L^{1}(Y)$.  Then there exists a sequence of stochastic integral operators from $L^{1}(Y)$ to $L^{1}(X)$ which converges to $S$ on $V$. \end{theorem}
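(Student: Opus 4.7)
The plan is to leverage Lemma \ref{seqpart2}, which provides a sequence of partitions $\{\mathcal{Q}_n=\{F_i^{(n)}\}_{i\in\mathbb{N}}\}_{n=1}^\infty$ of $Y$ into disjoint sets of finite measure such that $M_{\mathcal{Q}_n}g\to g$ in $L^1(Y)$ for every $g$ in the fixed finite-dimensional subspace $V$. The conceptual key is that $S\circ M_{\mathcal{Q}_n}$ is a composition of stochastic operators, and because $M_{\mathcal{Q}_n}$ lands in the (algebraic) span of the characteristic functions $\{\chi_{F_i^{(n)}}\}$, the composite is realizable as an honest integral operator whose kernel is assembled from the functions $S\chi_{F_i^{(n)}}\in L^1(X)$.

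Concretely, I would define
$$S_n(x,y)=\sum_{i:\,\nu(F_i^{(n)})>0}\frac{(S\chi_{F_i^{(n)}})(x)}{\nu(F_i^{(n)})}\,\chi_{F_i^{(n)}}(y).$$
For each fixed $y$, at most one term is nonzero, so there is no convergence issue; positivity of $S$ makes $S_n\geq 0$, and joint measurability in $(x,y)$ follows from the measurability of each summand. The stochastic kernel property is then verified from condition (2) of the stochastic operator definition:
$$\int_X S_n(x,y)\,d\mu(x)=\sum_i\frac{\int_X (S\chi_{F_i^{(n)}})(x)\,d\mu(x)}{\nu(F_i^{(n)})}\chi_{F_i^{(n)}}(y)=\sum_i\chi_{F_i^{(n)}}(y)=1$$
for almost every $y\in Y$, where the interchange of sum and integral is legitimized by Tonelli's theorem.

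Let $T_n$ be the stochastic integral operator with kernel $S_n$. A direct computation (again using Tonelli for nonnegative terms) shows that, for every $g\in L^1(Y)$,
$$T_n g(x)=\sum_i \frac{(S\chi_{F_i^{(n)}})(x)}{\nu(F_i^{(n)})}\int_{F_i^{(n)}}g(y)\,d\nu(y)=\bigl(S(M_{\mathcal{Q}_n}g)\bigr)(x).$$
Applying Lemma \ref{absineq} to the difference $M_{\mathcal{Q}_n}g-g$, for every $g\in V$ we obtain
$$\|T_n g-Sg\|_{L^1(X)}=\|S(M_{\mathcal{Q}_n}g-g)\|_{L^1(X)}\leq \|M_{\mathcal{Q}_n}g-g\|_{L^1(Y)},$$
and the right-hand side tends to $0$ by Lemma \ref{seqpart2}. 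Hence $T_n g\to Sg$ in $L^1(X)$ for every $g\in V$.

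The main conceptual work has already been done in Lemmas \ref{seqpart} and \ref{seqpart2}; what remains is essentially bookkeeping. The only subtleties are (i) handling the partition cells of measure zero, and (ii) justifying the exchange of the possibly countable sum with the integrals over $X$ and $Y$, both of which are routine via Tonelli's theorem since all summands are nonnegative. There is no genuine obstacle: the strategy reduces the problem of approximating a general stochastic operator to the approximation of inputs $g\in V$ by partition-adapted step functions, at which point $S$ is automatically representable as an integral against a kernel built from its values on characteristic functions.
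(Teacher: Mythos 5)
Your proof is correct, but it approximates $S$ from the opposite side from the paper. The paper partitions the \emph{target} space $X$ and shows that $M_{\mathcal{P}}S$ is a stochastic integral operator: the kernel is produced abstractly, by observing that for $x\in E_k$ the functional $f\mapsto (M_{\mathcal{P}}Sf)(x)$ is bounded on $L^1(Y)$ and invoking the Riesz representation of $L^1(Y)^*$ as $L^\infty(Y)$, after which stochasticity of the kernel is checked via Fubini; convergence then comes from applying Lemma \ref{seqpart2} to the \emph{image} $S(V)\subseteq L^1(X)$. You instead partition the \emph{source} space $Y$, take $S\circ M_{\mathcal{Q}_n}$, and write the kernel down explicitly as $\sum_i (S\chi_{F_i^{(n)}})(x)\chi_{F_i^{(n)}}(y)/\nu(F_i^{(n)})$, so no duality argument is needed and the stochastic-kernel identity follows directly from condition (2) of the definition of a stochastic operator; convergence comes from Lemma \ref{seqpart2} applied to $V$ itself combined with the contraction estimate of Lemma \ref{absineq}, which gives the clean quantitative bound $\|T_ng-Sg\|_1\le\|M_{\mathcal{Q}_n}g-g\|_1$. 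Your route is the more elementary and explicit of the two; the paper's left-composition has the mild advantage that its approximants take values in the simple functions over $\mathcal{P}$, and both versions adapt equally well to the doubly stochastic variant. The one step you should make explicit is the identification $T_ng=S(M_{\mathcal{Q}_n}g)$: Tonelli justifies exchanging the sum with the integral over $Y$, but to push the (generally infinite) sum $\sum_i a_i\chi_{F_i^{(n)}}$ through $S$ you also need the $L^1$-continuity of $S$, which is exactly what Lemma \ref{absineq} supplies since the partial sums converge to $M_{\mathcal{Q}_n}g$ in $L^1(Y)$.
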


\begin{proof} Let $\mathcal{P}=\{E_i\}_{i\in \mathbb{N}}$ be a partition of $X$ into disjoint sets of finite measure.  Then $M_{\mathcal{P}}S$ is a stochastic operator from $L^{1}(Y)\to L^{1}(X)$; we will show that it is a stochastic integral operator. Fix $x\in X$. Then there exists a unique $k$ such that $x\in E_k$.  Define the functional $g_x(f)=(M_{\mathcal{P}}Sf)(x)$.  Then $$\begin{array}{rcl}  \vert g_x(f)\vert &=& \vert \frac{1}{\mu(E_k)}\int_{E_k}(Sf(t))d\mu(t)\vert \\  
& \le&  \frac{1}{\mu(E_k)}\int_{X}\vert (Sf)(t)\vert d\mu(t) \\ & \le & \frac{1}{\mu(E_k)}\int_{Y}\vert f(y)\vert d\nu(y)\quad \textnormal{by Lemma \ref{absineq} }\\ & = &\frac{1}{\mu(E_k)}\Vert f\Vert_1.\end{array}$$  Hence $g_x$ is a bounded linear functional of $L^1(Y)$.  So by the Riesz representation theorem, there exist a nonnegative function $h_x\in L^{\infty}(Y)$ such that $g_x(f)=\int_Y f(y)h_x(y)d\nu(y)$.  Now let $K_{\mathcal{P}}(x,y)=h_x(y)$ for all $x\in X$ and all $y\in Y$. Since $K_{\mathcal{P}}(x,y)=\sum_{i\in \mathbb{N}}\chi_{E_i}(x)h_x(y)$,  $K_{\mathcal{P}}(x,y)$ is measurable. Then $(M_{\mathcal{P}}Sf)(x)=\int_Y K_{\mathcal{P}}(x,y)f(y)d\nu(y)$.  

Since $M_{\mathcal{P}}S$ is a stochastic operator, we have 
$$\int_X ((M_{\mathcal{P}}S)f)(x) d\mu(x)=\int_Y f(y)d\nu(y)\quad \forall f\in L^1(Y)$$ and by using Fubini's Theorem, we find
$$\begin{array}{rcl}
\int_Y f(y)d\nu(y) &=& \int_X ((M_{\mathcal{P}}S)f)(x) d\mu(x)\\ &=& \int_X\int_Y K_{\mathcal{P}}(x,y)f(y)d\nu(y)d\mu(x)\\
&=&\int_Y f(y) (\int_X K_{\mathcal{P}}(x,y)d\mu(x))d\nu(y).
\end{array}$$

Therefore $\int_X K_{\mathcal{P}}(x,y)d\mu(x)=1$ for almost all $y\in Y$.
Since $V$ is a finite dimensional subspace of $L^{1}(Y)$, the forward image $S(V)$ is a finite dimensional subspace of $L^{1}(X)$.  The result now follows from Lemma \ref{seqpart2}.
\end{proof}
We also have a doubly stochastic version of this theorem:
\begin{theorem}
Let $(X,\mu )$ and $(Y,\nu)$ be finite measure spaces. A doubly stochastic operator $D:L^1(Y)\to L^1(X)$ on a finite dimensional subspace $V$ of $L^1(Y)$ can be approximated by doubly stochastic integral operators.
\end{theorem}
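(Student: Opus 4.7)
The plan is to imitate the proof of Theorem \ref{sto:markov} essentially verbatim, taking the approximating operators to be $M_{\mathcal{P}_n} D$ for a suitably chosen sequence of partitions $\{\mathcal{P}_n\}$ of $X$. The content of Theorem \ref{sto:markov} already establishes that each such composition is a stochastic integral operator, so the only new work is to confirm that, under the finite measure and doubly stochastic hypotheses, the resulting kernels are in fact doubly stochastic.

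To carry this out, I would fix a partition $\mathcal{P} = \{E_i\}$ of $X$ into disjoint measurable sets (all automatically of finite measure, since $\mu(X) < \infty$) and apply the construction in the proof of Theorem \ref{sto:markov} to $D$ in place of $S$. This yields a measurable kernel $K_{\mathcal{P}}(x,y)$ with $(M_{\mathcal{P}} D)f(x) = \int_Y K_{\mathcal{P}}(x,y) f(y)\, d\nu(y)$ and $\int_X K_{\mathcal{P}}(x,y)\, d\mu(x) = 1$ for almost every $y$. To obtain the second kernel normalization, I would evaluate the operator at the constant function $1_Y \in L^1(Y)$ (which lies in $L^1(Y)$ because $\nu(Y) < \infty$): since $D$ is doubly stochastic, $D(1_Y) = 1_X$, and since $\mathcal{P}$ is a partition of $X$, the definition of $M_{\mathcal{P}}$ gives $M_{\mathcal{P}}(1_X) = 1_X$. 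Hence $(M_{\mathcal{P}} D)(1_Y) = 1_X$, and integrating the kernel representation against $1_Y$ then gives $\int_Y K_{\mathcal{P}}(x,y)\, d\nu(y) = 1$ for almost every $x$. Thus $K_{\mathcal{P}}$ is a doubly stochastic kernel and $M_{\mathcal{P}} D$ is a doubly stochastic integral operator.

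To produce convergence on $V$, I would apply Lemma \ref{seqpart2} to the finite dimensional subspace $D(V) \subseteq L^1(X)$, obtaining partitions $\{\mathcal{P}_n\}_{n=1}^\infty$ of $X$ into disjoint sets of finite measure such that $M_{\mathcal{P}_n} g \to g$ in $L^1(X)$ for every $g \in D(V)$. Specializing to $g = Df$ for $f \in V$ yields $M_{\mathcal{P}_n} D f \to Df$ in $L^1(X)$, completing the approximation by doubly stochastic integral operators. I do not anticipate a real obstacle; the one point worth highlighting is that left composition with $M_{\mathcal{P}}$ preserves the identity $D(1_Y)=1_X$, which is exactly what promotes the stochastic kernel of Theorem \ref{sto:markov} to a doubly stochastic one.
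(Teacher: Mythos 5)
Your proposal is correct and follows essentially the same route as the paper's own proof: compose $D$ with the conditional-expectation operators $M_{\mathcal{P}_n}$, reuse the kernel construction from Theorem \ref{sto:markov} to get $\int_X K_{\mathcal{P}}(x,y)\,d\mu(x)=1$, and then test against the constant function $1$ to obtain the second normalization $\int_Y K_{\mathcal{P}}(x,y)\,d\nu(y)=1$, with convergence on $V$ coming from Lemma \ref{seqpart2} applied to $D(V)$. The paper's argument is the same, only stated more tersely.
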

\begin{proof}
 Let $\mathcal{P}=\{E_i\}_{i=1}^n$ be a partition of $X$ into disjoint sets of finite measure. 
The  operator $M_\mathcal{P}:L^1(X)\to L^1(X)$ from Definition \ref{exp} is a doubly stochastic operator and since the composition of doubly stochastic operators is a doubly stochastic operator, $M_\mathcal{P}D$ is a doubly stochastic operator. By a proof similar to that of Theorem \ref{sto:markov}, for all $f\in L^1(Y)$ we have  $(M_{\mathcal{P}}Df)(x)=\int_Y K_{\mathcal{P}}(x,y)f(y)d\nu(y)$ such that  $\int_X K_{\mathcal{P}}(x,y)d\mu(x)=1$ for almost all $y\in Y$. Now suppose $f=1$. Then  $\int_Y K_{\mathcal{P}}(x,y)d\nu(y)=1$ for almost all $x\in X$ and hence $M_{\mathcal{P}}D$ is a doubly stochastic integral operator.
\end{proof}

   \begin{theorem}\label{main2}
   Let $(X,\mu)$ and $(Y,\nu)$ be two $\sigma$-finite measure spaces, $f\in L^1(X,\mathbb{R}^n)$ and $g\in L^1(Y,\mathbb{R}^n)$. If 
  $f$ is matrix  majorized by $g$, 
  then 
   $$\int_X \phi (f(x))d\mu(x)\leq \int_Y \phi (g(y))d\nu(y)$$ for all nonnegative sublinear functionals
  $ \phi: \mathbb{R}^n\rightarrow [0,\infty)$.
   
   \end{theorem}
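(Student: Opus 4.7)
The plan is to reduce the theorem to the stochastic integral operator case, where it is already handled by Theorem \ref{contmaj}, and then pass to a limit using the approximation provided by Theorem \ref{sto:markov} together with a Lipschitz estimate for sublinear functionals on $\mathbb{R}^n$.

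First, by Definition \ref{defn:matrixmaj}, fix a stochastic operator $S:L^1(Y,\nu)\to L^1(X,\mu)$ with $f_k=Sg_k$ for each $k=1,\dots,n$, and let $V\subseteq L^1(Y)$ be the finite dimensional subspace spanned by $g_1,\dots,g_n$. Theorem \ref{sto:markov} produces a sequence of stochastic integral operators $T_m$ with stochastic kernels $K_m:X\times Y\to[0,\infty)$ such that $T_m g_k\to Sg_k = f_k$ in the $L^1(X)$ norm for every $k$. Setting $f^{(m)}(x):=(T_m g_1(x),\dots,T_m g_n(x))$, each component is obtained from $g$ through the common kernel $K_m$, so Theorem \ref{contmaj} applies directly and gives
$$\int_X \phi(f^{(m)}(x))\, d\mu(x) \leq \int_Y \phi(g(y))\, d\nu(y)$$
for every sublinear $\phi:\mathbb{R}^n\to\mathbb{R}$, in particular for the nonnegative sublinear $\phi$ in the statement.

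Second, I would pass to the limit using the fact that any sublinear functional on $\mathbb{R}^n$ is Lipschitz with respect to $|\cdot|$. Subadditivity yields $\phi(u)-\phi(v)\leq \phi(u-v)$ (and the same with $u,v$ swapped); decomposing $u-v=\sum_{k=1}^n(u_k-v_k)e_k$ and using subadditivity with positive homogeneity gives $\phi(u-v)\leq L\,|u-v|$ where $L:=\max_k\max\{\phi(e_k),\phi(-e_k)\}$. The nonnegativity hypothesis on $\phi$ (together with $\phi(0)=0$) ensures $L\ge 0$. Therefore
$$\left|\int_X \phi(f^{(m)})\,d\mu - \int_X \phi(f)\,d\mu\right| \leq L\int_X |f^{(m)}(x)-f(x)|\, d\mu(x) = L\sum_{k=1}^n \|T_m g_k - f_k\|_{L^1(X)} \longrightarrow 0.$$
Letting $m\to\infty$ in the inequality above yields the conclusion.

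The heavy lifting is already done by Theorem \ref{sto:markov}, which guarantees that a general stochastic operator can be pointwise approximated on any finite dimensional subspace by kernel operators; once that tool is available, the rest of the argument is simply a uniform Lipschitz estimate and an invocation of Theorem \ref{contmaj}. The only mild point to verify is that the approximants can be taken to act on all of $g$ with a single common kernel, which is automatic because $T_m$ is a scalar stochastic operator applied componentwise in Definition \ref{defn:matrixmaj}.
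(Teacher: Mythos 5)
Your proposal is correct and follows essentially the same route as the paper's own proof: approximate the stochastic operator on $\operatorname{span}\{g_1,\dots,g_n\}$ by stochastic integral operators via Theorem \ref{sto:markov}, apply Theorem \ref{contmaj} to each approximant, and pass to the limit using the Lipschitz property of sublinear functionals on $\mathbb{R}^n$. The only difference is that you spell out the Lipschitz constant $L=\max_k\max\{\phi(e_k),\phi(-e_k)\}$ explicitly, whereas the paper simply asserts Lipschitz continuity; this is a welcome bit of extra detail, not a divergence in method.
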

   \begin{proof}
Let $g=(g_1,...,g_n)$ and   $V=\operatorname{span}\{g_1,..., g_n\}$. Since $V$ is a finite dimensional subspace of $L^1(Y)$, by using Theorem \ref{sto:markov}, there exists a sequence of stochastic integral operators   $\{S_k\}_{k=1}^\infty$ which converges to  the stochastic operator $S$ coming from Definition \ref{defn:matrixmaj}.  
Now by using Theorem \ref{contmaj}, for each $k\in\mathbb{N}$ we obtain $\int_X \phi(S_k g)d\mu(x)\leq \int_Y \phi(g)d\nu(y)$ for all  sublinear functionals $\phi$.

Since $\phi$ is sublinear on  $\mathbb{R}^n$, it is Lipschitz; denote its Lipschitz constant as $c$.  Then we have,
$$\int_{X}|\phi(S_kg)-\phi(Sg)|d\mu(x) \leq c\sum_{j=1}^n\int_X|S_kg_j-Sg_j|d\mu(x)\quad \forall k\in\mathbb{N}, \, \textnormal{ where } c\geq 0.$$
Since $\lim_{k\to \infty} S_kg_j=Sg_j$ in $L^1$ for all $j$, the left hand side must go to zero which means that $\lim_{k\to \infty }\int_{X}\phi(S_kg)d\mu(x)=\int_{X}\phi(Sg)d\mu(x) $.  Therefore we have
$$\int_X \phi(f)d\mu(x)=\int_X \phi(S g)d\mu(x)\leq \int_Y \phi(g)d\nu(y).$$
   \end{proof}
   
  We note that a special case of this result is a slight generalization of a theorem of Alberti; when $X=Y$ and $\mu=\nu$, Theorem \ref{main2} reduces to one direction of the following result which was proved using methods from the theory of von Neumann algebras. 
  
  \begin{theorem}  \cite[Theorem 1]{Alberti}
   Let $(X,\mu)$  be  a $\sigma$-finite measure space and  $f,g\in L^1(X,\mathbb{R}^n)$.  Then
  $f$ is matrix  majorized by $g$, 
  if and only if 
   $$\int_X \phi (f(x))d\mu(x)\leq \int_X \phi (g(x))d\mu(x)$$ for all nonnegative sublinear functionals
  $ \phi: \mathbb{R}^n\rightarrow [0,\infty)$.
 \end{theorem}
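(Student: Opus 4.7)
The forward direction is immediate from Theorem \ref{main2} specialized to $Y=X$ and $\nu=\mu$, so the substance lies in the converse.

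For the converse, my plan is a Hahn--Banach separation argument in $L^1(X,\mathbb{R}^n)$, whose continuous dual is $L^\infty(X,\mathbb{R}^n)$. Consider the convex set
\[
C \;=\; \{\,Sg : S \text{ is a stochastic operator on } L^1(X)\,\} \;\subseteq\; L^1(X,\mathbb{R}^n),
\]
which one must first show is $L^1$-closed, using the contractivity from Lemma \ref{absineq}, the finite-dimensionality of $V=\mathrm{span}\{g_1,\ldots,g_n\}$, and a weak-$\ast$ compactness argument on the adjoints $S_k^{*}$ to extract a limiting stochastic operator. Assuming $f\notin C$, Hahn--Banach then yields $\Phi=(\varphi_1,\ldots,\varphi_n)\in L^\infty(X,\mathbb{R}^n)$ strictly separating $f$ from $C$:
\[
\int_X \Phi\cdot f\,d\mu \;>\; \sup_{S}\int_X \Phi\cdot Sg\,d\mu.
\]

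Next I would identify the right-hand supremum. Passing to the adjoint, $\int \Phi\cdot Sg\,d\mu=\int g\cdot S^{*}\Phi\,d\mu$. By Theorem \ref{sto:markov}, one may approximate $S$ by a stochastic integral operator with kernel $K(x,y)$, so $(S^{*}\Phi)(y)=\int K(x,y)\Phi(x)\,d\mu(x)$ is a probability-measure average of $\Phi$ and thus lies in the closed convex hull $\mathcal{K}\subseteq\mathbb{R}^n$ of the essential range of $\Phi$. A measurable-selection argument (e.g.\ Kuratowski--Ryll-Nardzewski applied to $y\mapsto \arg\max_{v\in\mathcal{K}} g(y)\cdot v$, then lifted to a kernel by a Carath\'eodory-type construction) identifies
\[
\sup_{S}\int_X \Phi\cdot Sg\,d\mu \;=\; \int_X \rho(g(y))\,d\mu(y),
\]
where $\rho(u):=\sup_{v\in\mathcal{K}}u\cdot v$ is the support function of $\mathcal{K}$, automatically sublinear on $\mathbb{R}^n$.

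To match the \emph{nonnegative}-sublinear hypothesis, I would shift $\Phi$ by a constant $c\in\mathbb{R}^n$ with $-c\in\mathcal{K}$, so that $\rho_c(u):=\rho(u)+c\cdot u$ (the support function of $\mathcal{K}+c$) is nonnegative sublinear. Since $\Phi(x)+c\in\mathcal{K}+c$ a.e., the pointwise bound $(\Phi(x)+c)\cdot f(x)\le \rho_c(f(x))$ combined with the separation and the fact that stochastic operators preserve integrals then gives $\int\rho_c(f)\,d\mu>\int\rho_c(g)\,d\mu$, contradicting the hypothesis with $\phi=\rho_c$. The main obstacles are (i) the closedness of $C$ --- the genuinely infinite-dimensional step where Alberti originally invoked von Neumann algebras; (ii) the measurable-selection and kernel reconstruction realizing $\rho(g(y))$ as $g\cdot S^{*}\Phi$; and (iii) neutralizing the shift-induced correction $c\cdot\int(f-g)\,d\mu$, which ultimately requires the coordinate-wise integral equality $\int f_j\,d\mu=\int g_j\,d\mu$ --- a subtle consequence of the hypothesis since nonnegative-sublinear inequalities alone give only $\int f_j^\pm \le \int g_j^\pm$.
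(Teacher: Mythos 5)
First, a point of comparison: the paper does not prove this statement at all. It is quoted from \cite[Theorem 1]{Alberti}, the surrounding text records only that the forward implication is recovered as the $X=Y$, $\mu=\nu$ special case of Theorem \ref{main2}, and the authors explicitly say they do not know whether the converse of Theorem \ref{main2} holds for arbitrary measures. So there is no in-paper argument to measure your sketch against; the question is whether it stands on its own, and as written it does not. Your forward direction (via Theorem \ref{main2}) is exactly the paper's observation and is fine.

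For the converse, two of the gaps you yourself flag are fatal rather than routine. (i) The closedness of $C=\{Sg: S \text{ stochastic}\}$ is the entire content of the theorem: the point-weak-$\ast$ limit of the adjoints $S_k^{*}$ is a positive unital map on $L^\infty(X)$ that need not be normal, i.e., need not be the adjoint of any operator on $L^1(X)$, and this is precisely the obstruction Alberti's von Neumann algebra machinery exists to handle. Deferring it to ``one must first show'' leaves the theorem unproved. (ii) More seriously, the integral equality $\int_X f_j\,d\mu=\int_X g_j\,d\mu$ that you need to neutralize the shift term $c\cdot\int_X(f-g)\,d\mu$ is \emph{not} a consequence of the stated hypothesis, subtle or otherwise. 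Every sublinear $\phi$ satisfies $\phi(0)=0$, so if $f\equiv 0$ then $\int_X\phi(f)\,d\mu=0\le\int_X\phi(g)\,d\mu$ holds for every nonnegative sublinear $\phi$ and every $g$, while $f=Sg$ for an integral-preserving $S$ would force $\int_X g_j\,d\mu=0$; taking $X$ a single atom and $g=1$ already defeats the claimed implication. Either the test class must contain all sublinear (hence all linear) functionals, or the componentwise integral equality must be assumed; your step (iii) cannot be derived, and without it the separation argument cannot close. (This is arguably a defect in the statement as transcribed from Alberti rather than in your strategy, but your proof relies on resolving it.) The remaining machinery --- $L^1$/$L^\infty$ separation and identifying $\sup_S\int_X\Phi\cdot Sg\,d\mu$ with the integral of the support function of the essential range of $\Phi$ evaluated along $g$ --- is a reasonable and standard plan, but the proposal is an outline whose hardest steps are exactly the ones left open.
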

  
 We do not know if the converse to Theorem \ref{main2} holds for arbitrary measures.
  
  We now consider a generalization of majorization known as multivariate majorization. In the setting of $\mathbb{R}^n$, we can show (Theorem \ref{thm:DSmultivar}) that  matrix majorization and multivariate majorization are strongly related.
  
  \begin{definition}
  Let $(X,\mu)$ and $(Y,\nu)$ be finite measure spaces, $f\in L^1(X,\mathbb{R}^n)$,  and $g\in L^1(Y,\mathbb{R}^n)$. Then $f$ is \emph{multivariate majorized} by $g$ if there exists a doubly stochastic operator $D:L^1(Y)\to L^1(X)$ such that  $f=Dg$. 
  \end{definition}

 \begin{theorem}\label{thm:DSmultivar}
Let $(X,\mu)$ and $(Y,\nu)$ be finite measure spaces, $f\in L^1(X,\mu ,\mathbb{R}^n)$, $g\in L^1(Y,\nu ,\mathbb{R}^n)$, $h\in  L^1(X,\mu, (0,\infty))$, and  $k\in L^1(Y,\nu, (0,\infty))$. The following are equivalent:
\begin{enumerate}
    \item \label{i1} $(f_1, f_2, \dots, f_n, h)$ is matrix majorized by $(g_1, g_2, \dots, g_n, k)$; i.e.,
    there exists a stochastic operator $S:L^1(Y,\nu)\to L^1(X,\mu)$ such that $Sg_i=f_i$ for all $i=1,...,n$ and $Sk=h$,
    \item \label{i2}$\left(\frac{f_1}{h}, \frac{f_2}{h}, \dots, \frac{f_n}{h}\right)$ is multivariate majorized by $\left(\frac{g_1}{k}, \frac{g_2}{k}, \dots, \frac{g_n}{k}\right)$ with respect to measures $\alpha$ and $\beta$ where the measures $\alpha$ and $\beta$ are defined by 
    $\alpha=h\, d\mu$ and $\beta=k\, d\nu$; i.e.,
    there exists a doubly stochastic operator $D:L^1(Y,\beta)\to L^1(X,\alpha)$ such that $D\frac{g_i}{k}=\frac{f_i}{h}$ for all $i=1,...,n$.
    \end{enumerate}
\end{theorem}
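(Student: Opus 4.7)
The plan is to prove the equivalence by directly constructing one operator from the other via a change of measure. Given the stochastic operator $S$ in (\ref{i1}), I will define
\[
D: L^1(Y,\beta)\to L^1(X,\alpha), \qquad Du \;=\; \frac{1}{h}\, S(ku),
\]
and conversely, given the doubly stochastic operator $D$ in (\ref{i2}), I will define
\[
S: L^1(Y,\nu)\to L^1(X,\mu), \qquad Sv \;=\; h\cdot D\!\left(\tfrac{v}{k}\right).
\]
Since $h$ and $k$ are strictly positive, these formulas make sense pointwise, and the two constructions are inverse to one another. The intertwining relations $Sg_i=f_i$ and $D(g_i/k)=f_i/h$ transfer immediately: $D(g_i/k)=S(g_i)/h=f_i/h$, and $Sg_i = h\cdot D(g_i/k) = h\cdot(f_i/h)=f_i$. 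Similarly $Sk = h\cdot D1$, which equals $h$ exactly when $D1=1$, matching the doubly stochastic requirement.

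The verification then breaks into checking the defining properties. For the direction (\ref{i1})$\Rightarrow$(\ref{i2}): if $u\in L^1(Y,\beta)$ then $\int_Y|u|k\,d\nu<\infty$, so $ku\in L^1(Y,\nu)$, and Lemma \ref{absineq} gives
\[
\int_X |Du|\,d\alpha \;=\; \int_X |S(ku)|\,d\mu \;\leq\; \int_Y |ku|\,d\nu \;=\; \int_Y |u|\,d\beta,
\]
so $D$ maps $L^1(Y,\beta)$ into $L^1(X,\alpha)$. Linearity is inherited from $S$, and positivity follows since $k,h>0$. The integral-preserving property becomes
\[
\int_X Du\,d\alpha \;=\; \int_X S(ku)\,d\mu \;=\; \int_Y ku\,d\nu \;=\; \int_Y u\,d\beta,
\]
and $D1 = S(k)/h = h/h = 1$, so $D$ is doubly stochastic. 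The reverse direction (\ref{i2})$\Rightarrow$(\ref{i1}) is symmetric: for $v\in L^1(Y,\nu)$ we have $v/k\in L^1(Y,\beta)$ since $\int_Y|v/k|\,d\beta=\int_Y|v|\,d\nu$, and the computations for positivity and integral preservation run in parallel using that $D$ is doubly stochastic.

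There is no serious obstacle — the argument is a straightforward change-of-density computation — but the one point that warrants care is keeping track of which measure ($\mu$ versus $\alpha$, $\nu$ versus $\beta$) each integral is taken against, and confirming that the conversion formulas genuinely map $L^1$ spaces into each other. Lemma \ref{absineq} is the only nontrivial ingredient needed for the norm estimates; beyond that, the equivalence is essentially the observation that the substitutions $u \leftrightarrow v/k$ and $Du\leftrightarrow Sv/h$ convert the stochastic condition with respect to $(\mu,\nu)$ into the doubly stochastic condition with respect to $(\alpha,\beta)$, with the constraint $Sk=h$ precisely encoding $D1=1$.
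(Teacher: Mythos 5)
Your construction $Du=\tfrac{1}{h}S(ku)$ and $Sv=h\cdot D(v/k)$ is exactly the paper's proof, which writes the same maps as compositions of multiplication operators, $D=T_{1/h}ST_{k}$ and $S=T_{h}DT_{1/k}$, and invokes the fact that products of stochastic operators are stochastic. Your verification is correct and just spells out the same bookkeeping explicitly, so this matches the paper's argument.
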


\begin{proof}
 Let  $T_s$ denote the multiplication operator which maps any function $f$ to the product $sf$.
 
 (\ref{i1}) $\Rightarrow (\ref{i2})$: 
 Suppose there exists a stochastic operator $S:L^1(Y,\nu)\rightarrow L^1(X,\mu)$ such that $Sg_i=f_i$ for all $i=1, \dots, n$ and $Sk=h$. We now show that \newline $\left(\frac{f_1}{h}, \frac{f_2}{h}, \dots, \frac{f_n}{h}\right) \in L^1(Y, \beta)$ is multivariate majorized by $\left(\frac{g_1}{k}, \frac{g_2}{k}, \dots, \frac{g_n}{k}\right) \in L^1(X, \alpha)$.

   The multiplication operator $T_{1/h}$ is a stochastic operator from $L^1(X,\mu)$ to $L^1(X,\alpha)$.  Note that for all $i=1, \dots, n$,  $f_i\in L^1(X,\mu)$ and $T_{1/h}(f_i)=\frac{f_i}{h}$. Similarly,  $T_{k}$ is a stochastic operator from $L^1(Y,\beta)$ to $L^1(Y,\nu)$ and   for all $i=1, \dots, n$, $g_i\in L^1(Y,\beta)$ and  $T_{k}(g_i)=kg_i$. Construct $D=T_{1/h}ST_{k}:L^1(Y,\beta)\to L^1(X,\alpha)$, which  is a stochastic operator since it is a product of stochastic operators. Furthermore, $D1=1$,   therefore $D$ is a doubly stochastic operator that maps $\frac{g_i}{k}$ to $\frac{f_i}{h}$. 
   
 (\ref{i2}) $\Rightarrow (\ref{i1})$:   Assume there exists a  doubly stochastic $D:L^1(Y, \beta)\to L^1(X,\alpha)$ such that $\frac{f_i}{h}=D\frac{g_i}{k}$. We define a stochastic operator $T_{h}:L^1(X,\alpha)\to L^1(X,\mu)$ such that for all $i=1, \dots, n$, $f_i\in L^1(X,\mu )$, and $T_{h}(f_i)=f_ih$. Similarly, define $T_{1/k}:L^1(Y,\nu)\to L^1(Y,\beta)$ such that for all  $i=1, \dots, n$, $g_i\in L^1(Y,\nu)$ and $T_{1/k}(g_i)=\frac{g_i}{k}$. Construct  $S=T_{h}DT_{1/k}:L^1(Y,\nu)\to L^1(X,\mu)$. Since $D$ is a (doubly) stochastic operator and the product of stochastic operators is a stochastic operator, $S$ is a stochastic operator such that $Sg_i=f_i$ for all $i=1,...,k$ and $Sk=h$.
\end{proof}

In the setting of $\mathbb{R}$ we can show that  matrix majorization,  multivariate majorization, and the convex function inequalities are all strongly related. The following theorem can be viewed as a simplified version of the result on mixing distance in \cite{RSS1978}.  

 \begin{theorem}
Let $(X,\mu)$ and $(Y,\nu)$ be finite measure spaces, $f\in L^1(X,\mathbb{R})$, $g\in L^1(Y,\mathbb{R})$, $h\in  L^1(X,(0,\infty))$,  $k\in L^1(Y,(0,\infty))$, with $\int_X h \,d\mu=\int_Y k \,d\nu$. The following are equivalent:
\begin{enumerate}
\item \label{i:S} There exists a stochastic operator $S:L^1(Y, \nu)\rightarrow L^1(X, \mu)$ such that $Sg=f$ and $Sk=h$. 
    \item \label{i:C} For all real valued convex functions on $\mathbb{R}$,
$$\int_X \phi \left(\frac{f}{h}\right)h \,d\mu\leq  \int_Y \phi \left(\frac{g}{k}\right)k \,d\nu. $$
\item \label{i:DS}
There exists a  doubly stochastic $D:L^1(Y, \beta)\to L^1(X, \alpha)$ such that $D\left(\frac{g}{k}\right)=\frac{f}{h}$, where the measures $\alpha$ and $\beta$ are defined by
 $\alpha=h\, d\mu$ and $\beta=k\, d\nu$. 
\end{enumerate}
\end{theorem}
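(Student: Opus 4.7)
My plan is to deduce the theorem from two earlier results by recognizing it as their combination. The equivalence (\ref{i:S}) $\Leftrightarrow$ (\ref{i:DS}) is exactly the $n=1$ specialization of Theorem \ref{thm:DSmultivar}: matrix majorization of $(f,h)$ by $(g,k)$ via a stochastic operator $S$ on $L^1(Y,\nu)\to L^1(X,\mu)$ is precisely condition (\ref{i1}) there, and its condition (\ref{i2}) unfolds to the existence of a doubly stochastic $D:L^1(Y,\beta)\to L^1(X,\alpha)$ with $D(g/k)=f/h$, which is our (\ref{i:DS}). So this half requires nothing beyond invoking Theorem \ref{thm:DSmultivar}.

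For (\ref{i:C}) $\Leftrightarrow$ (\ref{i:DS}), the plan is to apply the Hardy--Littlewood--Pólya theorem (Theorem \ref{HLP}) to the functions $f/h\in L^1(X,\alpha)$ and $g/k\in L^1(Y,\beta)$ under the reweighted measures $\alpha=h\,d\mu$ and $\beta=k\,d\nu$. The hypotheses of Theorem \ref{HLP} hold: $\alpha$ and $\beta$ are finite (since $h$ and $k$ are $L^1$), with equal total mass $\alpha(X)=\int_X h\,d\mu=\int_Y k\,d\nu=\beta(Y)$ by the standing assumption; and $f/h\in L^1(X,\alpha)$ because $\int_X |f/h|\,d\alpha=\int_X|f|\,d\mu<\infty$, and likewise for $g/k\in L^1(Y,\beta)$. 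Under this setup, condition (3) of Theorem \ref{HLP} is exactly our (\ref{i:DS}), and condition (2) reads
\[
\int_X \phi(f/h)\,d\alpha\leq \int_Y \phi(g/k)\,d\beta\quad\text{for all convex }\phi:\mathbb{R}\to\mathbb{R},
\]
which becomes our (\ref{i:C}) after substituting $d\alpha=h\,d\mu$ and $d\beta=k\,d\nu$.

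I do not anticipate any real obstacle; the proof is essentially a change-of-measure bookkeeping exercise that lets two previously established results (Theorem \ref{thm:DSmultivar} for the $n=1$ case and Theorem \ref{HLP} under the reweighted measures) be stitched together. The one point requiring care is verifying the measure-theoretic side conditions (finiteness of $\alpha,\beta$ with equal mass, and integrability of the ratios $f/h$ and $g/k$ against the new measures), all of which follow immediately from the hypotheses that $h,k$ are positive $L^1$ functions with $\int_X h\,d\mu=\int_Y k\,d\nu$.
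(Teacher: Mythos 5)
Your proposal is correct and follows essentially the same route as the paper: the equivalence of (\ref{i:S}) and (\ref{i:DS}) via Theorem \ref{thm:DSmultivar} with $n=1$, and the equivalence of (\ref{i:C}) and (\ref{i:DS}) by applying Theorem \ref{HLP} to $f/h$ and $g/k$ under the reweighted measures $\alpha$ and $\beta$. Your write-up is in fact slightly more careful than the paper's, since you explicitly verify the finiteness and equal-mass hypotheses of Theorem \ref{HLP} and the integrability of the ratios, which the paper leaves implicit.
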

\begin{proof}


Both (\ref{i:C}) and (\ref{i:DS}) are equivalent to $\frac{g}{k}\prec \frac{f}{h}$ by Theorem \ref{HLP}.  
%
%
%
%
%
The equivalence of (\ref{i:S}) and (\ref{i:DS}) follows from Theorem \ref{thm:DSmultivar} with $n=1$. 
\end{proof}

\section*{Acknowledgements}
S.M.\  was supported by a travel grant from the Iranian Ministry of Science Research and Technology. She gratefully acknowledges  the University of Guelph and Brandon University for the time she spent at the respective universities during the course of this work. R.P.\ was supported by NSERC Discovery Grant number 400550. S.P.\ was supported by NSERC Discovery Grant number 1174582, the Canada Foundation for Innovation (CFI) grant number 35711, and the Canada Research Chairs (CRC) Program grant number 231250.  The authors thank the anonymous referee for their helpful comments. 
   
\begin{bibdiv}
\begin{biblist}

\bibitem{Alberti} P.M.~Alberti, \emph{A note on stochastic operators on $L^1$-spaces and convex functions}, J.\ Math.\ Anal.\ Appl.\ \textbf{130}(2) (1988), pp.~556-563.

\bibitem{BQ1993} P.\ Busch and R.\ Quadt, \emph{Concepts of coarse graining in quantum mechanics}, Int.\ J.\ Theor.\ Phys.\ \textbf{32}(12) (1993), pp.\ 2261-2269. 
  
\bibitem{BQ1990}  P.\ Busch and R.\ Quadt, \emph{On Ruch's principle of decreasing mixing distance in classical statistical physics},  J.\ Stat.\ Phys.\ \textbf{61}(1/2) (1990), pp.\ 311-328.

\bibitem{CD05} P.~Cerone and S.S.~Dragomir, \emph{Approximation of the integral mean divergence and $f$-divergence via mean results},   Math.\ Comput.\ Modelling \textbf{42}(1-2) (2005), pp.~207-219.

\bibitem{CohenBook} J.E.~Cohen, J.H.B.~Kempermann, and G.~Zb\u {a}ganu, \emph{Comparisons of stochastic matrices with applications in information theory, statistics, economics and population}. Birkh\"{a}user: Boston, 1998.
\bibitem{Csiszar} I. Csisz{\'a}r, \emph{Information measures of difference of probability distributions and indirect observations}, Studia Sci. Math. Hungar., 2, (1967)
  pp.\ 299--318.

\bibitem{Chong} K.M.\ Chong, \emph{Some extensions of a theorem of Hardy, Littlewood and P{\'o}lya and their applications}, Canadian Journal of Mathematics, 26, (1974) pp.\ 1321-1340.

\bibitem{dahl1999}G.\ Dahl, \emph{Matrix majorization},
 Lin.\ Alg.\ Appl., \textbf{288} (1999), pp.\ 53-73.
 

\bibitem{Day} P.W.\ Day \emph{Decreasing rearrangements and doubly stochastic operators},
  {Amer. Math. Soc}, \textbf{178} (1973), pp.\ 383-392.
\bibitem{Gour2017} G.\ Gour, \emph{Quantum majorization and a complete set of entropic conditions
for quantum thermodynamics},  arXiv preprint arXiv:1708.04302 (2017).

\bibitem{HLP} G.H.~Hardy, J.E.~Littlewood, and G.~P{\'o}lya, \emph{Some simple inequalities satisfied by convex functions},
  {Messenger Math}, \textbf{58} (1929), pp.\ 145-152.

\bibitem{QIStext} M.~Hayashi, S.~Ishizaka, A.~Kawachi, G.~Kimura, and T.~Ogawa, \emph{Introduction to quantum information science}. Springer-Verlag: Berlin, 2014.

\bibitem{Heinonen} T.~Heinonen,  \emph{Optimal measurements in quantum mechanics}, Phys.\ Lett.\ A, \textbf{346} (2005), pp.\ 77-86.

\bibitem{HZbook}  T.\ Heinosaari and M.~Ziman, \emph{The mathematical language of quantum theory: from uncertainty to entanglement}, Cambridge Univ.\  Press: New York (2011).

  

 
 \bibitem{JC15} K.C.~Jain and P.~Chhabra, \emph{New information inequalities on new generalized $f$-divergence and applications}, Le Matematiche \textbf{70}(2) (2015), pp.~271-281.
 

\bibitem{LMbook} A.~Lasota and M.C.~Mackey, \emph{Chaos, fractals and noise: stochastic aspects of dynamics}, 2nd ed.\ (1994), Springer: New York

\bibitem{LV06} F.~Liese and I.~Vajda, \emph{On divergences and informations in statistics and information theory}, IEEE Trans.\ Information Theory \textbf{52}(10) (2006), pp.~4394-4412.

\bibitem{MOA}A.W.\ Marshall, I.\  Olkin, and B.C.\ Arnold,  \emph{Inequalities: theory of majorization and its applications}, 2nd ed.\ (2011), Springer: New York.

\bibitem{Morimoto} T.\ Morimoto, \emph{Markov processes and the H-theorem}, J.\ Phys.\ Soc.\ Jpn, \textbf{18} (1963), pp.~328-331.

\bibitem{MM13} M.S.~Moslehian  and M.~Kian, \emph{Non-commutative $f$-divergence functional},  Math.\ Nachr.\ \textbf{286}(14-15) (2013), pp.~1514-1529.


\bibitem{RW} P.\ Roselli and M.\ Willem, \emph{A convexity inequality}, Amer.\ Math.\ Monthly \ \textbf{109} (2002), pp.\ 64-70.


\bibitem{RSS1978} E.\ Ruch, R.\ Schranner, and T.H.\ Seligman, \emph{The mixing distance}, J.\ Chem.\ Phys.\ \textbf{69}(1) (1978), pp.\ 386-392.

\bibitem{RSS1980} E.\ Ruch, R.\ Schranner, and T.H.\ Seligman, \emph{Generalization of a theorem by Hardy, Littlewood, and P\'olya}, J.\ Math.\ Anal.\ Appl.\ \textbf{76} (1980), pp.\ 222-229. 


\bibitem{SV} I.\ Sason, and S. Verd{\'u}, \emph{$ f $-divergence inequalities}, IEEE Trans.\ Information Theory, \textbf{62} (2016), pp.\ 5973--6006. 

\bibitem{Zan} S.~Zanzinger,  \emph{On informational divergences for general statistical theories}. Int.\  J.\ Theor.\ Phys.\ \textbf{37}(1) (1998), pp.~357-363. 

\end{biblist}
\end{bibdiv}
\end{document}